\theoremstyle{plain}\newtheorem{definition}{Definition}[section]
\theoremstyle{definition}\newtheorem{theorem}{Theorem}[section]
\theoremstyle{plain}\newtheorem{lemma}[theorem]{Lemma}
\theoremstyle{plain}\newtheorem{coro}[theorem]{Corollary}
\theoremstyle{plain}
\theoremstyle{remark}\newtheorem{remark}{Remark}[section]
\newcommand{\Div}{\mathrm{div}\,}
\newcommand{\B}{\Big}
\newcommand{\be}{\begin{equation}}
\newcommand{\ee}{\end{equation}}
 \newcommand{\ba}{\begin{aligned}}
 \newcommand{\ea}{\end{aligned}}
  \newcommand{\f}{\frac}
  \newcommand{\ben}{\begin{enumerate}}
   \newcommand{\een}{\end{enumerate}}
\newcommand{\Rmnum}[1]{\expandafter\@slowromancap\romannumeral #1@}
\numberwithin{equation}{section}
\begin{document}
\title{Energy equality for       the isentropic compressible
Navier-Stokes equations without upper bound of the density }
\author{  Yulin Ye\footnote{ School of Mathematics and Statistics,
Henan University,
Kaifeng, 475004,
P. R. China. Email: ylye@vip.henu.edu.cn}, ~ Yanqing Wang\footnote{Corresponding author.   College of Mathematics and   Information Science, Zhengzhou University of Light Industry, Zhengzhou, Henan  450002,  P. R. China Email: wangyanqing20056@gmail.com}  ~   \, \, and \, Huan Yu \footnote{ School of Applied Science, Beijing Information Science and Technology University, Beijing, 100192, P. R. China Email:  yuhuandreamer@163.com}
 }
\date{}
\maketitle
\begin{abstract}
In this paper, we  are concerned with the minimal regularity of both the density
and the velocity for the weak solutions keeping energy equality
in the   isentropic compressible Navier-Stokes equations. The energy equality
 criteria
 without upper bound of the density    are established.
  Almost all previous corresponding  results  requires $\varrho\in L^{\infty}(0,T;L^{\infty}(\mathbb{T}^{d}))$.
  \end{abstract}
\noindent {\bf MSC(2000):}\quad 35Q30, 35Q35, 76D03, 76D05\\\noindent
{\bf Keywords:} compressible Navier-Stokes equations;  energy equality;  vacuum 
\section{Introduction}
\label{intro}
\setcounter{section}{1}\setcounter{equation}{0}

The  classical isentropic compressible Navier-Stokes equations
\be\left\{\ba\label{INS}
&\varrho_t+\nabla \cdot (\varrho v)=0, \\
&(\varrho v)_{t} +\Div(\varrho v\otimes v)+\nabla
P(\varrho )-  \text{div\,}(\mu\mathfrak{D}v)- \nabla(\lambda\text{div\,}v)=0,\\
\ea\right.\ee
where $\varrho$ stands for the density of the flow, $v$
reprents the flow  velocity field and $P(\varrho)=\varrho^{\gamma}$ is the scalar pressure;
The viscosity coefficients $\mu$ and $\lambda$ satisfy $\mu\geq 0$ and $2\mu+d\lambda>0$.
$\mathfrak{D}v=\f12(\nabla v\otimes \nabla v^{T} )$ is the stain tensor;
We complement equations \eqref{INS} with initial data
\begin{equation}\label{INS1}
	\varrho(0,x)=\varrho_0(x),\ (\varrho v)(0,x)=(\varrho_0 v_0)(x),\ x\in \Omega,
\end{equation}
where we define $v_0 =0$ on the sets $\{x\in \Omega:\ \varrho_0=0\}.$ In the present paper, we consider the periodic case, that is $\Omega=\mathbb{T}^d$ with dimension $d\geq 2$.

One of the celebrated results of the  isentropic compressible Navier-Stokes equations \eqref{INS}  is the global existence of
the   finite energy   weak solutions due to   Lions  \cite{[Lions1]} with $\gamma\geq \f{3d}{d+2}$ for $d=2$ or $3$, where $d$ is the spatial dimension.   Subsequently, in \cite{FNP}, Feireisl-Novotny-Petzeltov\'a further extended the Lions' work  to $\gamma >\f{3}{2}$ for $d=3$. In \cite{JZ}, Jiang and Zhang considered the global existence of weak solutions for the case $\gamma>1$
with the spherical symmetric initial data. For the convenience of the reader, we recall the definition of the   finite energy   weak solutions:
\begin{definition}\label{wsdefi}
 	A pair ($\varrho,v$) is called a weak solution to \eqref{INS} with initial data \eqref{INS1} if ($\varrho,v$) satisfies
 	\begin{enumerate}[(i)]
 		\item equation \eqref{INS} holds in $D'(0,T;\Omega)$ and
 		\begin{equation}P(\varrho ), \varrho |v|^2\in L^\infty(0,T;L^1(\Omega)),\ \ \ \nabla v\in L^2(0,T;L^2(\Omega)),
 		\end{equation}
 	\item[(ii)]
 	the density $\varrho$ is a renormalized solution of \eqref{INS} in the sense of \cite{[PL]}.
 		\item[(iii)]
 		the energy inequality holds
 		\begin{equation}\label{energyineq}
 		\begin{aligned}
 	E(t) +\int_0^T\int_{\Omega}\left( \mu|\nabla v|^2+(\mu+\lambda)|\Div v|^2 \right) dxdt \leq E(0), 		\end{aligned}\end{equation}
 where $E(t)=\int_{\Omega}\left( \frac{1}{2}\varrho |v|^2+\f{\varrho^{\gamma}}{\gamma-1} \right) dx$.
 	\end{enumerate}
 \end{definition}
Note that the   finite energy   weak solutions  constructed in \cite{[Lions1],FNP}    satisfy
the energy inequality \eqref{energyineq}. A natural question is
how much regularity of weak solutions in
these system are required to ensure   energy  equality.
In \cite{[Yu2]},
Yu opened the door of research of  the  Lions-Shinbrot type  energy equality criterion
 to the weak solutions of the  compressible Navier-Stokes  equations   with both non vacuum and vacuum cases.
 The so-called    Lions-Shinbrot type criterion is that if
 the   Leray-Hopf weak solutions $v$ of the 3D incompressible  Navier-Stokes equations satisfy
$
v\in L^{p}(0,T;L^{q}(\mathbb{T}^d)),~\text{with}~\f{2}{p}+
 \f{2}{q}=1~\text{and}~q\geq 4,$
then the following corresponding energy  equality holds
 $$
 \|v\|_{L^{2}(\Omega)}^{2}+2 \int_{0}^{T}\|\nabla v\|_{L^{2}(\Omega)}^{2}ds= \|v_0\|_{L^{2}(\Omega)}^{2}.
 $$
 Yu's results in \cite{[Yu2]} without  vacuum can be formulated as: if a weak solution
 $(\varrho, v)$ of  the compressible Navier-Stokes equations \eqref{INS}  with $\mu =2\varrho$ and $\lambda =0$
  satisfies
\be\ba\label{yu1}
\sqrt{\varrho}v\in L^{\infty}(0,T;L^{2}(\Omega)),\sqrt{\varrho}\nabla v\in L^{2}(0,T;L^{2}(\Omega)),\\
0<c_{1}\leq\varrho\leq c_{2}<\infty, \ \ \nabla\sqrt{\varrho} \in L^{\infty}(0,T;L^{2}(\Omega)),\\
v\in L^{p}(0,T;L^{q}(\Omega)) ~~\f1p+\f1q\leq \f5{12}   \ \text{ and}\   q\geq6, \sqrt{\varrho_{0}} v_{0} \in L^{4}(\Omega),
\ea
\ee
 then such a weak solution $(\varrho, v)$ for any $t \in[0, T]$ satisfies
 \begin{equation}\label{EIy}\ba
\int_{\mathbb{T}^{d}}\left( \frac{1}{2}\varrho |v|^2+\f{\varrho^{\gamma}}{\gamma-1} \right) dx +\int_0^t\int_{\mathbb{T}^{d}}& \varrho|\mathfrak{D}v|^2   dxdt = \int_{\mathbb{T}^{d}}\left( \frac{1}{2} \varrho_0 |v_0|^2+\f{\varrho_{0}^{\gamma}}{\gamma-1}\right)dx.
\ea
\end{equation}
 Subsequently,  Nguye-Nguye-Tang  \cite{[NNT]}  obtained  the following
  Lions-Shinbrot type criterion for the weak solutions
  of the compressible Navier-Stokes equations \eqref{INS}  with $\mu=\mu(\varrho)$, $\lambda=\lambda(\varrho)$ and general pressure law $P(\varrho)\in C^{2}(0,+\infty)$,
 \be\ba\label{NNT}
 &0<  c_{1}\leq\varrho\leq c_{2}<\infty,  v\in L^{\infty}(0,T; L^{2}
 (\mathbb{T}^{d})), \nabla v\in L^{2}(0,T; L^{2}(\mathbb{T}^{d})),   \\&\sup_{t\in(0,T)}\sup_{|h|<\varepsilon}|h|^{-\f12}
 \|\varrho(\cdot+h,t)-\varrho(\cdot,t)\|_{L^{2}(\mathbb{T}^{d})}<\infty,\\
 &v\in L^{4}(0,T;L^{4}(\mathbb{T}^{d})).
 \ea\ee
 ensures that the corresponding energy equality  is valid.  Very recently, in \cite{[WY]}, the authors
refined  \eqref{NNT}     to
 \be \ba\label{wy}
&0<  c_{1}\leq\varrho\leq c_{2}<\infty,   v\in L^{\infty}(0,T; L^{2}(\mathbb{T}^{d})),   \nabla v\in L^{2}(0,T; L^{2}(\mathbb{T}^{d})),   \\
& v\in L^{\f{2p}{p-1}}(0,T; L^{\f{2q}{q-1}}(\mathbb{T}^{d})),
 \nabla v\in L^{p}(0,T; L^{q}(\mathbb{T}^{d})).
\ea \ee
It is worth pointing out that
a special case $p=q=2$ in \eqref{wy}
 is an improvement of \eqref{NNT}. Other kind Lions-Shinbrot type criterion
   for the  compressible
 Navier-Stokes system \eqref{INS}  can be found in \cite{[Liang],[WY]}. To the knowledge of
 the authors, all  Lions-Shinbrot type criterion for the energy equality in the compressible Navier-Stokes system
   require  the upper   bound of the density.
   One goal of this paper is to  analysis  the role of   the integrability of    density
    in the energy  equality for the weak solutions of the   compressible Navier-Stokes equations.
In the case of no vacuum,  we formulate our first result  as follows:
 \begin{theorem}\label{the1.2}
 Let $(\varrho, v)$ be a weak solutions   in the sense of definition \eqref{wsdefi}. Assume
 \begin{equation} \label{u}\begin{aligned}
&0<  c\leq\varrho \in L^{ k}(0,T;L^{ l}(\mathbb{T}^d)), \\
& v\in L^{p }(0,T;L^{q} (\mathbb{T}^{d})),\nabla v\in L^{\f{kp}{k(p-2)-4p}}(0,T; L^{\f{lq}{l(q-2)-4q}}(\mathbb{T}^{d})),
\end{aligned}
\end{equation}
with $k> \max\{\frac{4p}{p-2},\f{(\gamma-2)p }{2}\}$, $l> \max\{\frac{4q}{q-2}, \f{(\gamma-2)q}{2}\}$ and $k(4-p)+7p\geq 0,\  l(4-q)+7q\geq 0$.
the energy equality below  is valid, for any $t\in[0,T]$,
\begin{equation}\label{EI}\ba
\int_{\mathbb{T}^{d}}\left( \frac{1}{2}\varrho |v|^2+\f{\varrho^{\gamma}}{\gamma-1} \right) dx +\int_0^t\int_{\mathbb{T}^{d}}&\left[\mu|\nabla v|^2+(\mu+\lambda)|\Div v|^2 \right] dxdt\\=&\int_{\mathbb{T}^{d}}\left( \frac{1}{2} \varrho_0 |v_0|^2+\f{\varrho_{0}^{\gamma}}{\gamma-1}\right)dx.\ea\end{equation}
\end{theorem}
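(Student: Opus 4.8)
The plan is to prove \eqref{EI} by a mollification argument of Constantin--E--Titi/Lions type, treating the kinetic energy $\frac12\varrho|v|^2$ and the internal energy $\frac{\varrho^\gamma}{\gamma-1}$ separately. Let $J_\varepsilon$ be a standard spatial mollifier and write $f^\varepsilon=J_\varepsilon*f$. Since $(\varrho,v)$ is a finite energy weak solution, $\varrho^\gamma\in L^\infty(0,T;L^1(\mathbb{T}^d))$ and $\nabla v\in L^2(0,T;L^2(\mathbb{T}^d))$ hold for free, so the internal energy and the viscous dissipation are finite without further hypotheses; moreover the lower bound $\varrho\ge c$ combined with $\sqrt\varrho\,v\in L^\infty(0,T;L^2)$ yields $v\in L^\infty(0,T;L^2)$. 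The assumptions in \eqref{u} will be used only to make the nonlinear commutators vanish.

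For the internal energy I would use that $\varrho$ is a renormalized solution (item (ii) of Definition \ref{wsdefi}). With $b(\varrho)=\varrho^\gamma/(\gamma-1)$ one has $b'(\varrho)\varrho-b(\varrho)=\varrho^\gamma=P(\varrho)$, so the renormalized continuity equation reads $\partial_t\big(\frac{\varrho^\gamma}{\gamma-1}\big)+\Div\big(\frac{\varrho^\gamma}{\gamma-1}v\big)+P(\varrho)\,\Div v=0$ in $D'$. Integrating over $\mathbb{T}^d\times(0,t)$ gives $\int_{\mathbb{T}^d}\frac{\varrho^\gamma}{\gamma-1}\,dx\big|_0^t=-\int_0^t\int_{\mathbb{T}^d}P(\varrho)\,\Div v\,dx\,d\tau$. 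This is legitimate once $P(\varrho)\,\Div v\in L^1$, and the thresholds $k>\frac{(\gamma-2)p}{2}$, $l>\frac{(\gamma-2)q}{2}$ are exactly what let $\varrho^\gamma\in L^{k/\gamma}(0,T;L^{l/\gamma})$ pair, by Hölder on the finite-measure domain, with $\Div v\in L^{a}(0,T;L^{b})$, where $a=\frac{kp}{k(p-2)-4p}$ and $b=\frac{lq}{l(q-2)-4q}$.

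For the kinetic energy I would mollify the momentum equation in space and pair it with $v^\varepsilon$, integrating over $\mathbb{T}^d\times(0,t)$. The viscous contribution converges to $\int_0^t\int(\mu|\nabla v|^2+(\mu+\lambda)|\Div v|^2)$ since $\nabla v^\varepsilon\to\nabla v$ in $L^2L^2$. In the time-derivative term $\int_0^t\int(\varrho v)^\varepsilon_\tau\cdot v^\varepsilon$, integration by parts in time together with the mollified continuity equation $\varrho^\varepsilon_\tau=-\Div(\varrho v)^\varepsilon$ produces the boundary contribution $\int(\varrho v)^\varepsilon\cdot v^\varepsilon-\frac12\int\varrho^\varepsilon|v^\varepsilon|^2$, which tends to $\frac12\int\varrho|v|^2$ (identified through weak-in-time continuity of $\varrho v$ in $L^2$ and the lower bound $\varrho\ge c$), plus $\int_0^t\int[(\varrho v)^\varepsilon\otimes v^\varepsilon]:\nabla v^\varepsilon$ and a residual commutator $\int_0^t\int[(\varrho v)^\varepsilon-\varrho^\varepsilon v^\varepsilon]\cdot\partial_\tau v^\varepsilon$ that I would absorb by an additional regularization in time using the bounds in \eqref{u}. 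Combining with the convective term $-\int_0^t\int\Div(\varrho v\otimes v)^\varepsilon\cdot v^\varepsilon=\int_0^t\int(\varrho v\otimes v)^\varepsilon:\nabla v^\varepsilon$, everything reduces to showing
\[
C_\varepsilon:=\int_0^t\int_{\mathbb{T}^d}\big[(\varrho v\otimes v)^\varepsilon-(\varrho v)^\varepsilon\otimes v^\varepsilon\big]:\nabla v^\varepsilon\,dx\,d\tau\longrightarrow 0 .
\]

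The decisive step, and the one I expect to be hardest, is the vanishing of $C_\varepsilon$ in the absence of an upper bound for $\varrho$, since one cannot pull $\varrho$ out in $L^\infty$ and must instead distribute its integrability against the velocity norms. I would use only \eqref{u}: from $\varrho\in L^kL^l$ and $v\in L^pL^q$ one gets $\varrho v\in L^{\sigma_t}(0,T;L^{\sigma_x})$ with $\frac{1}{\sigma_t}=\frac1k+\frac1p$, $\frac{1}{\sigma_x}=\frac1l+\frac1q$, so the product-mollification commutator $(\varrho v\otimes v)^\varepsilon-(\varrho v)^\varepsilon\otimes v^\varepsilon$ tends to $0$ strongly in $L^{r_t}(0,T;L^{r_x})$ with $\frac{1}{r_t}=\frac1k+\frac2p$ and $\frac{1}{r_x}=\frac1l+\frac2q$ (both limits equal $\varrho v\otimes v$ because $f^\varepsilon\to f$ in every $L^s$, $s<\infty$). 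Because $\frac{1}{r_t}+\frac1a=1-\frac3k\le1$ and $\frac{1}{r_x}+\frac1b=1-\frac3l\le1$, generalized Hölder on the finite-measure domain pairs this strong convergence against the family $\nabla v^\varepsilon$, which is uniformly bounded in $L^aL^b$ precisely because $\nabla v$ is assumed in that space, and forces $C_\varepsilon\to0$; the constraints $k(4-p)+7p\ge0$ and $l(4-q)+7q\ge0$ are what keep all auxiliary Hölder exponents at or above $1$. The same scheme, now with the scalar $P(\varrho)=\varrho^\gamma$ in place of $\varrho v\otimes v$ and $\Div v^\varepsilon$ in place of $\nabla v^\varepsilon$, disposes of the pressure commutator $(\varrho^\gamma)^\varepsilon\,\Div v^\varepsilon-\varrho^\gamma\,\Div v$ in the kinetic balance, which is why those density thresholds are governed by $\gamma$. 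Collecting the two balances and sending $\varepsilon\to0$ yields \eqref{EI}.
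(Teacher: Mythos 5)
Your exponent bookkeeping for the convective and pressure commutators paired against $\nabla v^{\varepsilon}$ is sound (indeed $\frac{1}{k}+\frac{2}{p}+\frac{1}{a}=1-\frac{3}{k}<1$ with $a=\frac{kp}{k(p-2)-4p}$, and similarly in space), but the proof does not close because of the one term you postpone: the residual commutator $\int_0^t\int\left[(\varrho v)^{\varepsilon}-\varrho^{\varepsilon}v^{\varepsilon}\right]\cdot\partial_{\tau}v^{\varepsilon}$. Under \eqref{u} you have purely spatial integrability of $\varrho$, $v$, $\nabla v$ and nothing else; a spatial mollifier gives no gain in the time variable, so $\partial_{\tau}v^{\varepsilon}=(\partial_{\tau}v)^{\varepsilon}$ remains a mere distribution (dividing the momentum equation by $\varrho$ to express $\partial_t v$ does not commute with mollification), and there is no norm against which the $o(\varepsilon)$ smallness of $(\varrho v)^{\varepsilon}-\varrho^{\varepsilon}v^{\varepsilon}$ can be cashed in. Your proposed escape, ``additional regularization in time,'' fails for a structural reason: once the mollifier acts in time, the quantitative Lions-type commutator bound one needs (Lemma \ref{pLions}) requires $\partial_t\varrho$ and $\nabla\varrho$ in Lebesgue spaces, and Theorem \ref{the1.2} assumes no spatial regularity of $\varrho$ whatsoever --- the $\nabla\sqrt{\varrho}$ hypothesis appears only in Theorem \ref{the1.5}, which is precisely the theorem where the paper does mollify in space--time. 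This term is not a technicality to be absorbed; it is the crux of the non-vacuum case.

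The paper's proof is organized exactly to make this term never appear: instead of $v^{\varepsilon}$ it tests the momentum equation with $\left(\frac{(\varrho v)^{\varepsilon}}{\varrho^{\varepsilon}}\right)^{\varepsilon}$ (following \cite{[NNT],[LS]}), so that the time term becomes the exact derivative $\frac12\partial_{\tau}\left(\frac{|(\varrho v)^{\varepsilon}|^{2}}{\varrho^{\varepsilon}}\right)$ plus terms converted by $\partial_{\tau}\varrho^{\varepsilon}=-\Div(\varrho v)^{\varepsilon}$ into purely spatial derivatives (see \eqref{c2}), and the pressure term yields the internal energy identically via $\int\frac{(\varrho v)^{\varepsilon}}{\varrho^{\varepsilon}}\cdot\nabla P(\varrho^{\varepsilon})=\frac{1}{\gamma-1}\int\partial_{\tau}P(\varrho^{\varepsilon})$ as in \eqref{c7} --- so no renormalized-equation step is needed; note that your appeal to item (ii) of Definition \ref{wsdefi} with the unbounded function $b(\varrho)=\varrho^{\gamma}/(\gamma-1)$ would itself require a truncation argument, since DiPerna--Lions renormalization is stated for admissible $b$. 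The price of this test function is the new factor $\nabla\left(\frac{(\varrho v)^{\varepsilon}}{\varrho^{\varepsilon}}\right)$, and the genuinely new content of the paper is Lemmas \ref{lem2.1} and \ref{lem2.3}, which bound $\nabla\left(\frac{f^{\varepsilon}}{g^{\varepsilon}}\right)$ and $\partial\left(\frac{(\varrho v)^{\varepsilon}}{\varrho^{\varepsilon}}\right)$ using only the lower bound $\varrho\geq c$ together with $\varrho\in L^{k}(0,T;L^{l})$; that is where the absence of an upper bound on the density is actually paid for, and no analogue of this mechanism appears in your outline. Without either those lemmas or a genuine substitute for the time-derivative step, the argument as proposed cannot reach \eqref{EI}.
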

\begin{remark}
This theorem is a generalization of recent wroks \cite{[WY],[NNT]}. It seems that  this is the first energy equality criterion for the compressible
Navier-Stokes equations without the upper bound of the density.
\end{remark}
\begin{remark}
A special case of \eqref{u} is that     $v\in L^4(0,T;L^4(\mathbb{T}^d)),$  $\nabla v\in L^{\frac{2k}{k-8}}(0,T;L^{\frac{2l}{l-8}})$ and $\varrho \in L^k(0,T;L^l(\mathbb{T}^d))$ with $k>\max\{8, 4(\gamma-2)\} $ and $l>\max\{8, 4(\gamma-2)\}$ guarantee the energy conservation of the weak solutions.  This shows that  lower integrability of the density  $\varrho$ means that more integrability of gradient  of the velocity  $\nabla v$ is necessary in energy conservation of the  isentropic compressible fluid with non-vacuum and the inverse is also true.
\end{remark}
Next, we turn our attentions to  the energy equality of the  isentropic Navier-Stokes equations allowing
  vacuum.
In  aforementioned work \cite{[Yu2]},
Yu  also deat with the energy conservation of system \eqref{INS} in the present  of vacuum and proved that  if a weak solution $(\varrho, v)$  in the sense of Definition  \ref{wsdefi} satisfies
\be\ba\label{yu}
&0\leq\varrho\leq c<\infty, \ \ \nabla\sqrt{\varrho} \in L^{\infty}(0,T;L^{2}(\mathbb{T}^{d})),\\
&v\in L^{p}(0,T;L^{q}(\mathbb{T}^{d})) ~~p\geq4   \ \text{ and}\   q\geq6,\ and\ v_0\in L^{q_0},\ q_0\geq 3,
\ea
\ee
then   the   energy equality \eqref{EI}    is valid for $t\in[0,T]$.

Developing the technique used in   \cite{[Yu2]}, the authors
 showed Lions's    $L^4 (0,T; L^4(\Omega))$   condition  for energy balance is also valid for the   weak solutions of  the  isentropic compressible Navier-Stokes equations  allowing vacuum via the following energy equality    criterion: if a weak solution $(\varrho, v)$ satisfies,
for any $p,q\geq 4$ and $dp< 2q+3d$ with $d\geq 2$,
 \be\label{key0}
\ba
&0\leq \varrho<c<\infty, \nabla\sqrt{\varrho}\in L^{\f{2p}{3-p}}(0,T;L^{\f{2q}{3-q}}(\mathbb{T}^{d})),
\\& v\in L^{\f{2p}{p-1}}(0,T;L^{\f{2q}{q-1} }(\mathbb{T}^{d})),
\nabla v\in L^{p}(0,T;L^{q}(\mathbb{T}^{d})),\ v_0\in L^{\f{q}{q-1}}(\mathbb{T}^{d}).
\ea\ee
then there holds \eqref{EI}.
Now, we state our second result for the compressible Navier-Stokes equations \eqref{INS} in the presence of vacuum as follows:
\begin{theorem}\label{the1.5}
 For any dimension $d\geq2$, the energy equality \eqref{EI} of   weak solutions  $(\varrho, v)$ to the compressible Navier-Stokes equation \eqref{INS} with vacuum is valid   for $t\in[0,T]$ provided
\be
  \ba
   &0\leq \varrho\in L^{k}(0,T;L^{l}(\mathbb{T}^d))
,\nabla\sqrt{\varrho}\in L^{\f{2pk}{2k(p-3)-p}}(0,T;L^{\f{2lq}{2l(q-3)-q}}(\mathbb{T}^d))\\
& v\in L^{p }(0,T;L^{q}(\mathbb{T}^d)),\nabla v \in L^{\f{pk}{k(p-2)-p}}(0,T;L^{\f{ql}{l(q-2)-q}}(\mathbb{T}^d))\ and\ v_0\in L^{\max\{
	\frac{2\gamma }{\gamma -1},\frac{q}{2}\}}(\mathbb{T}^d),
  \ea\ee
  where $k> \max\{\frac{p}{2(p-3)},\frac{p}{p-2}, \frac{(\gamma-1)p}{2},\frac{(\gamma-1)(d+q)p}{2q-d(p-3)}\}$, $l> \max\{\frac{q}{2(q-3)},\frac{q}{q-2}, \frac{(\gamma-1)q}{2}\}$, $p>3$ and $q>\max\{3, \frac{d(p-3)}{2}\}$.
  \end{theorem}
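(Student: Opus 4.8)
The plan is to derive the balance \eqref{EI} by mollifying the equations in the spatial variable, pairing the regularized momentum balance with the regularized velocity, and then showing that every commutator produced by the mollification is negligible as the parameter tends to zero. Write $f^{\epsilon}=f\ast\eta_{\epsilon}$ for a standard spatial mollifier (with, if needed, a further mollification in time to treat $\partial_t$). Mollifying the momentum equation in \eqref{INS} and testing against $v^{\epsilon}$ yields, after integrating over space and time,
\[
\int_{0}^{t}\!\!\int_{\mathbb{T}^{d}}\Big[\partial_{t}(\varrho v)^{\epsilon}\cdot v^{\epsilon}+\Div(\varrho v\otimes v)^{\epsilon}\cdot v^{\epsilon}+\nabla P^{\epsilon}\cdot v^{\epsilon}\Big]\,dx\,ds=\int_{0}^{t}\!\!\int_{\mathbb{T}^{d}}\big[\Div(\mu\mathfrak{D}v)^{\epsilon}+\nabla(\lambda\Div v)^{\epsilon}\big]\cdot v^{\epsilon}\,dx\,ds.
\]
The time-derivative term is rewritten using the mollified continuity equation $\varrho^{\epsilon}_{t}+\Div(\varrho v)^{\epsilon}=0$ together with the renormalization property (ii) of Definition~\ref{wsdefi}, which is what legitimizes differentiating the potential energy $\int\varrho^{\gamma}/(\gamma-1)$. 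At the formal level the inertial term cancels against the density-transport term, the two pressure contributions cancel against the evolution of the potential energy, and the viscous term reproduces exactly the dissipation in \eqref{EI}; hence the \emph{entire} content of the theorem is the vanishing of the mollification commutators.

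The commutators to be controlled are of three types: the inertial (Constantin--E--Titi) commutator, e.g. $(\varrho v\otimes v)^{\epsilon}-(\varrho v)^{\epsilon}\otimes v^{\epsilon}$ tested against $\nabla v^{\epsilon}$; the commutator arising from rewriting $\partial_{t}\tfrac12\varrho|v|^{2}$ through the transport structure; and the pressure commutator generated by applying the chain rule to $\varrho^{\gamma}$. The basic tool is the Constantin--E--Titi estimate $\|(fg)^{\epsilon}-f^{\epsilon}g^{\epsilon}\|_{L^{r}}\lesssim\epsilon\,\|\nabla f\|_{L^{r_{1}}}\|g\|_{L^{r_{2}}}$ and its trilinear analogue, together with $\|f^{\epsilon}-f\|_{L^{r}}\to0$. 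The decisive point is that these must be applied with the unbounded density carried as one of the Hölder factors rather than extracted in $L^{\infty}$.

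The estimates are then closed by Hölder's inequality in both time and space with the exponents prescribed in the statement. To feed in the density regularity appearing in the inertial commutator I would use $\nabla\varrho=2\sqrt{\varrho}\,\nabla\sqrt{\varrho}$, so that the assumed integrability of $\nabla\sqrt{\varrho}$, together with $\varrho\in L^{k}(0,T;L^{l})$ (which places $\sqrt{\varrho}$ in $L^{2k}(0,T;L^{2l})$), supplies precisely the control on $\nabla\varrho$ that the $L^{\infty}$ bound used to provide. The products $\sqrt{\varrho}\,\nabla\sqrt{\varrho}\,v\,v$, $\varrho\,\nabla v\,v$ and $\varrho^{\gamma-1}\nabla\varrho\,v$ are each placed in $L^{1}((0,T)\times\mathbb{T}^{d})$: the conjugacy of the fractional exponents $\tfrac{pk}{k(p-2)-p}$ and $\tfrac{2pk}{2k(p-3)-p}$ with $p,q$ and $k,l$ is exactly the bookkeeping making the sum of reciprocals equal $1$, while the lower bounds $k>\max\{\tfrac{p}{2(p-3)},\ldots\}$ and $l>\max\{\tfrac{q}{2(q-3)},\ldots\}$ guarantee these exponents are positive and admissible. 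Each commutator then carries a factor $\epsilon^{\alpha}$ with $\alpha>0$ and vanishes, and convergence of the initial data is secured by $v_{0}\in L^{\max\{2\gamma/(\gamma-1),\,q/2\}}$.

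The main difficulty, and the novelty over \cite{[WY],[NNT],[Yu2]}, is precisely the absence of an upper bound on $\varrho$. With $\varrho\in L^{\infty}$ one simply pulls $\|\varrho\|_{L^{\infty}}$ out of every commutator and is left with purely velocity-type estimates; here the density must be distributed across the Hölder products, and its regularity must enter through $\sqrt{\varrho}$ and $\nabla\sqrt{\varrho}$, since the vacuum forbids dividing by $\varrho$. I expect the hardest single estimate to be the inertial commutator in which $\nabla\varrho$ is coupled to two copies of the velocity: balancing $\sqrt{\varrho}\,\nabla\sqrt{\varrho}$ against $v\otimes v$ and the test gradient while still retaining a positive power of $\epsilon$ is what forces $p>3$, $q>\max\{3,\tfrac{d(p-3)}{2}\}$ and the sharp lower bounds on $k,l$; the dimensional restriction $q>\tfrac{d(p-3)}{2}$ in particular reflects the Sobolev room needed to absorb the spatial mollification loss in dimension $d$.
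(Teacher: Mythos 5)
Your overall skeleton --- space(-time) mollification, testing with the regularized velocity, reducing the interior identity to commutator estimates closed by H\"older's inequality with the stated exponents, and feeding the unbounded density in through $\nabla\varrho=2\sqrt{\varrho}\,\nabla\sqrt{\varrho}$ rather than an $L^\infty$ bound --- does match the first half of the paper's argument (there the time derivative is handled by the Lions space--time commutator $\partial_t(\varrho v)^\varepsilon-\partial_t(\varrho v^\varepsilon)$, with $\varrho_t$ and $\nabla\varrho$ estimated from the mass equation exactly as you indicate; note also that the convective commutator actually used is $(\varrho v\otimes v)^\varepsilon-(\varrho v)\otimes v^\varepsilon$, which needs no $\varepsilon$-rate, only strong convergence of mollifications). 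But your claim that the \emph{entire} content of the theorem is the vanishing of the commutators is a genuine gap. The commutator step only yields the identity tested against $\phi\in C_c^\infty((0,\infty))$, i.e., away from $t=0$; recovering the equality on $[0,t_0]$ with $\varrho_0,v_0$ on the right-hand side requires proving strong continuity $\sqrt{\varrho}\,v(t)\to\sqrt{\varrho_0}\,v_0$ in $L^2$ as $t\to0^+$. In the paper this occupies the entire second half of the proof: one shows $\varrho v\in C([0,T];L^{2\gamma/(\gamma+1)}_{\mathrm{weak}})$ from the momentum equation, computes $\partial_t\varrho^\gamma$ and $\partial_t\sqrt{\varrho}$ from the mass equation so as to apply the Aubin--Lions lemma and obtain $\varrho^\gamma$ and $\sqrt{\varrho}$ continuous in time in suitable $L^r$ spaces, combines this with the energy \emph{inequality} of Definition \ref{wsdefi} to force $\limsup_{t\to0}\int|\sqrt{\varrho}v-\sqrt{\varrho_0}v_0|^2\,dx=0$, and finally inserts piecewise-linear (Lipschitz) temporal cut-offs $\phi_\tau$ together with a Lebesgue-point argument. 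Dismissing this as ``secured by $v_0\in L^{\max\{2\gamma/(\gamma-1),\,q/2\}}$'' skips exactly the step where that hypothesis --- and, more importantly, the conditions $k\geq\frac{(\gamma-1)(d+q)p}{2q-d(p-3)}$ and $q>\frac{d(p-3)}{2}$ --- are actually consumed.

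Relatedly, your diagnosis of where the dimensional restriction comes from is incorrect: you attribute $q>\frac{d(p-3)}{2}$ to ``Sobolev room needed to absorb the spatial mollification loss'' in the inertial commutator, but in the paper none of the commutator estimates involves $d$ at all (they are pure H\"older bookkeeping in the prescribed Lebesgue exponents). The dimension enters only through the compact embedding required in the Aubin--Lions step, which gives $\varrho^\gamma\in C([0,T];L^{\frac{lq}{l(q-3)+(\gamma-1)q}}(\mathbb{T}^d))$ and $\sqrt{\varrho}\in C([0,T];L^{\frac{2lq}{2l(q-3)-q}}(\mathbb{T}^d))$ precisely under $q>\max\{3,\frac{d(p-3)}{2}\}$ and $k\geq\frac{(\gamma-1)(d+q)p}{2q-d(p-3)}$. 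So the hardest part of the proof is not the inertial commutator you single out, but the attainment of the initial energy; a complete write-up along your lines would have to supply that temporal-continuity argument in full.
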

\begin{remark}
If let $k=l\rightarrow+\infty$, then Theorem \ref{the1.5} will reduce to the result obtained in \cite{[YWW]}, hence, this result can be seen as a generalization of  recent works in \cite{[Yu2],[YWW]}. To the best of our knowledge, this seems to be the first result of the energy conservation criteria for the weak solutions of compressible Navier-Stokes equations without upper bound of density in the presence of vacuum.
\end{remark}
  \begin{coro}
The energy equality \eqref{EI} of   weak solutions  $(\varrho, v)$ to the compressible Navier-Stokes equation \eqref{INS} allowing vacuum holds  for $t\in[0,T]$  provided
   \begin{enumerate}[(1)]
 \item $\nabla v \in L^{2 }(0,T;L^{2}(\mathbb{T}^{d})),$
$$
  \ba
  &0\leq\varrho\in L^{k}(0,T;L^{l} (\mathbb{T}^{d}))
,\nabla\sqrt{\varrho}\in L^{\f{4k}{ k +4}}(0,T;L^{\f{4l}{  l+4}} (\mathbb{T}^{d})),\\
&   v\in L^{\f{4k}{k - 2}}(0,T;L^{\f{4l}{ l- 2}} (\mathbb{T}^{d})), v_0\in L^{\max\{\frac{2\gamma}{\gamma-1},\frac{2l}{l-2}\}}(\mathbb{T}^d),\\
&with\ k,l> 2\gamma\ and \ k>\frac{2\B(2(d+4)\gamma+d\B)l-4d(2\gamma+1)}{(8-d)l+2d};
  \ea$$
 \item
 $v\in L^4(0,T;L^4(\mathbb{T}^d))$,
 $$
 \ba
 &0\leq \varrho \in L^{k}(0,T;L^{l}(\mathbb{T}^d)), \nabla \sqrt{\varrho }\in L^{\frac{4k}{k-2}}(0,T;L^{\frac{4l}{l-2}}(\mathbb{T}^d)) ,\\
 &  \nabla v\in L^{\frac{2k}{k-2}}(0,T;L^{\frac{2l}{l-2}}(\mathbb{T}^d))\ and\ v_0\in L^{\frac{2\gamma}{\gamma-1}}(\mathbb{T}^d),\\
 & with\ k>\max\{2,\frac{4(\gamma -1)(d+4)}{8-d}\}\ and\ l>\max\{2,2(\gamma-1) \};
 \ea$$
    \end{enumerate}
  \end{coro}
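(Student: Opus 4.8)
The plan is to obtain this corollary as a direct specialization of Theorem \ref{the1.5}: no fresh analysis is required, only a choice of the exponent pair $(p,q)$ together with an algebraic check that every hypothesis of Theorem \ref{the1.5} collapses onto the conditions listed in (1) and (2). In each case I would read off $(p,q)$ from the prescribed integrability of $\nabla v$, then confirm that the remaining scaling exponents for $\nabla\sqrt\varrho$, $v$ and $v_0$ reduce to the stated ones, and finally that the constraints on $k,l,p,q$ simplify to the displayed bounds.

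For part (1) I would use $\nabla v\in L^2(0,T;L^2(\mathbb{T}^d))$ to determine $(p,q)$ by solving $\frac{pk}{k(p-2)-p}=2$ and $\frac{ql}{l(q-2)-q}=2$, which give
\[
p=\frac{4k}{k-2},\qquad q=\frac{4l}{l-2},\qquad p-3=\frac{k+6}{k-2},\qquad q-3=\frac{l+6}{l-2}.
\]
Substituting these, the density exponents of Theorem \ref{the1.5} become $\frac{2pk}{2k(p-3)-p}=\frac{4k}{k+4}$ and $\frac{2lq}{2l(q-3)-q}=\frac{4l}{l+4}$, the velocity condition becomes exactly $v\in L^{\frac{4k}{k-2}}(0,T;L^{\frac{4l}{l-2}}(\mathbb{T}^d))$, and since $\frac q2=\frac{2l}{l-2}$ the initial-data exponent matches. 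The requirements $k>\frac{(\gamma-1)p}{2}$ and $l>\frac{(\gamma-1)q}{2}$ reduce to $k,l>2\gamma$, while $\frac{p}{p-2}$, $\frac{p}{2(p-3)}$, $\frac{q}{q-2}$, $\frac{q}{2(q-3)}$ are automatically below $k$ (resp. $l$). The only substantial step is the clause $k>\frac{(\gamma-1)(d+q)p}{2q-d(p-3)}$: after substituting and clearing the positive factor $(l-2)(k-2)$ and the factor $k$, it becomes $8l(k-2)-d(k+6)(l-2)>4(\gamma-1)\big(l(d+4)-2d\big)$, which rearranges to $k\big(l(8-d)+2d\big)>2\big(2(d+4)\gamma+d\big)l-4d(2\gamma+1)$, i.e.\ exactly the stated lower bound on $k$.

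For part (2) the integrability $\nabla v\in L^{\frac{2k}{k-2}}(0,T;L^{\frac{2l}{l-2}}(\mathbb{T}^d))$ corresponds to $p=q=4$. Then $p-3=1$ gives $\frac{pk}{k(p-2)-p}=\frac{2k}{k-2}$ and $\frac{2pk}{2k(p-3)-p}=\frac{4k}{k-2}$ (and likewise in space), while $v\in L^4(0,T;L^4(\mathbb{T}^d))$ and $v_0\in L^{\frac{2\gamma}{\gamma-1}}(\mathbb{T}^d)$ follow from $\frac q2=2<\frac{2\gamma}{\gamma-1}$. Here $q>\frac{d(p-3)}{2}=\frac d2$ forces $d<8$, so $8-d>0$, and $k>\frac{(\gamma-1)(d+q)p}{2q-d(p-3)}$ becomes $k>\frac{4(\gamma-1)(d+4)}{8-d}$; since $\frac{4(d+4)}{8-d}\ge2$ the threshold $2(\gamma-1)$ is absorbed, leaving $k>\max\{2,\frac{4(\gamma-1)(d+4)}{8-d}\}$ and $l>\max\{2,2(\gamma-1)\}$.

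The main (indeed essentially the only) difficulty is the bookkeeping of signs when clearing denominators: one must confirm $2q-d(p-3)>0$ and $l(8-d)+2d>0$ so that the manipulations are genuine equivalences, and check that the $\gamma$-dependent threshold dominates the purely dimensional requirement $q>\frac{d(p-3)}{2}$. In part (1) the difference of the two relevant numerators simplifies to $4(\gamma-1)\big((d+4)l-2d\big)$, which is positive whenever $l>2\gamma$, so the dimensional constraint is subsumed and no additional hypothesis is needed.
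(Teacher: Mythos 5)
Your proposal is correct and is exactly the intended derivation: the paper states the corollary without a separate proof, as the direct specialization of Theorem \ref{the1.5} with $p=\frac{4k}{k-2},\ q=\frac{4l}{l-2}$ in case (1) and $p=q=4$ in case (2), and your algebra (in particular the reduction of $k>\frac{(\gamma-1)(d+q)p}{2q-d(p-3)}$ to the stated bound, the identity $4(\gamma-1)\big((d+4)l-2d\big)$ showing the constraint $q>\frac{d(p-3)}{2}$ is subsumed, and the observation that $p=q=4$ forces $d<8$) checks out.
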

  \begin{remark}
This shows that  lower integrability of the density  $\varrho$ means that more integrability of  the velocity $v$ or the gradient  of the velocity  $\nabla v$ are necessary in energy conservation of the  isentropic compressible fluid with vacuum and the inverse is also true.
  \end{remark}

The minimum regularity of weak solution keeping
energy conservation in the fluid equations  originated from Onsager's work \cite{[Onsager]}.
The recent progrss of Onsager's conjecture can be found in  \cite{[CY],[CET],[NNT1],[ADSW]}
We refer the readers to \cite{[WYY]} for  the   Onsager conjecture on the energy conservation for
 the isentropic compressible Euler equations via establishing
  the    energy conservation   criterion  involving the density $\varrho\in L^{k}(0,T;L^{l}(\mathbb{T}^{d}))$.
There are very significant
recent developments on  energy  equality of the Leray-Hopf weak solutions   of the 3D incompressible
Navier-Stokes equations in \cite{[WWY],[BY],[BC],[CL],[Zhang],[Yu1],[Yu3]}.

The paper is organized as follows: We present some auxiliary lemmas for dealing with the density without upper bound of the density. In section 3, we prove sufficient conditions for energy equality of   weak solutions keeping energy equality in the isentropic compressible Navier-Stokes equations in the absence of vacuum. Finally,   section 4 is  devoted to the  energy equality criterion in the presence of vacuum.
\section{Notations and some auxiliary lemmas} \label{section2}

First, we introduce some notations used in this paper.
For $p\in [1,\,\infty]$, the notation $L^{p}(0,\,T;X)$ stands for the set of measurable functions on the interval $(0,\,T)$ with values in $X$ and $\|f(t,\cdot)\|_{X}$ belonging to $L^{p}(0,\,T)$. The classical Sobolev space $W^{k,p}(\mathbb{T}^d)$ is equipped with the norm $\|f\|_{W^{k,p}(\mathbb{T}^d)}=\sum\limits_{|\alpha| =0}^{k}\|D^{\alpha}f\|_{L^{p}(\mathbb{T}^d)}$.   For simplicity, we denote by $$\int_0^T\int_{\mathbb{T}^d} f(t, x)dxdt=\int_0^T\int f\ ~~\text{and}~~ \|f\|_{L^p(0,T;X )}=\|f\|_{L^p(X)}.$$

On the one hand, if we let  $\eta$ be non-negative   smooth function only supported in the space ball of radius 1 and its integral equals to 1, we define the rescaled space mollifier $\eta_{\varepsilon}(x)=\frac{1}{\varepsilon^{d}}\eta(\frac{x}{\varepsilon})$ and
$$
f^{\varepsilon}(x)=\int_{\mathbb{T}^d}f(y)\eta_{\varepsilon}(x-y)dy,
$$
then we have the following  standard   properties of the mollifier kernel:
 \begin{lemma}\label{lem2.11}
	Let $ p,q,p_1,q_1,p_2,q_2\in[1,+\infty)$ with $\frac{1}{p}=\frac{1}{p_1}+\frac{1}{p_2},\frac{1}{q}=\frac{1}{q_1}+\frac{1}{q_2} $. Assume $f\in L^{p_1}(0,T;L^{q_1}(\mathbb{T}^d)) $ and $g\in L^{p_2}(0,T;L^{q_2}(\mathbb{T}^d))$. Then for any $\varepsilon>0$, there holds
	\begin{equation}\label{a4}
	\|(fg)^\varepsilon-f^\varepsilon g^\varepsilon\|_{L^p(0,T;L^q(\mathbb{T}^d))}\rightarrow 0,\ \ \ as\ \varepsilon\rightarrow 0.
	\end{equation}
\end{lemma}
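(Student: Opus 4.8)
The statement is the classical Friedrichs--Constantin--E--Titi commutator estimate for the spatial mollifier, with the time variable carried along by H\"older's inequality. Write $R_\varepsilon(f,g):=(fg)^\varepsilon-f^\varepsilon g^\varepsilon$ and observe that $R_\varepsilon$ is bilinear in $(f,g)$. The plan is to combine a single \emph{uniform} bilinear bound with a density argument, so that the actual convergence to $0$ only has to be checked on smooth data.

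First I would record the uniform bound. Since $\eta_\varepsilon\ge0$ has unit mass, Young's convolution inequality gives $\|h^\varepsilon(t)\|_{L^r(\mathbb{T}^d)}\le\|h(t)\|_{L^r(\mathbb{T}^d)}$ for every $r\in[1,\infty]$ and a.e.\ $t$. Together with H\"older in space ($\frac1q=\frac1{q_1}+\frac1{q_2}$) this yields, for a.e.\ $t$,
\[
\|R_\varepsilon(f,g)(t)\|_{L^q}\le\|(fg)^\varepsilon(t)\|_{L^q}+\|f^\varepsilon(t)\|_{L^{q_1}}\|g^\varepsilon(t)\|_{L^{q_2}}\le 2\|f(t)\|_{L^{q_1}}\|g(t)\|_{L^{q_2}},
\]
and then H\"older in time ($\frac1p=\frac1{p_1}+\frac1{p_2}$) gives
\[
\|R_\varepsilon(f,g)\|_{L^p(0,T;L^q)}\le 2\|f\|_{L^{p_1}(0,T;L^{q_1})}\|g\|_{L^{p_2}(0,T;L^{q_2})},
\]
a bound independent of $\varepsilon$.

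Next I would check convergence for smooth data. For $f,g\in C^\infty(\mathbb{T}^d\times[0,T])$ the elementary identity
\[
R_\varepsilon(f,g)(x,t)=\int_{\mathbb{T}^d}\eta_\varepsilon(y)\big(f(x-y,t)-f(x,t)\big)\big(g(x-y,t)-g(x,t)\big)\,dy-(f^\varepsilon-f)(g^\varepsilon-g)(x,t)
\]
holds. Because $\eta_\varepsilon$ is supported in $\{|y|\le\varepsilon\}$ and $f,g$ are Lipschitz in $x$ uniformly in $t$, each difference of the form $f(x-y,t)-f(x,t)$ is $O(\varepsilon)$, so both terms are $O(\varepsilon^2)$ pointwise, uniformly in $t$; integrating over the compact torus and over $[0,T]$ shows $\|R_\varepsilon(f,g)\|_{L^p(0,T;L^q)}\to0$.

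Finally I would remove the smoothness by density and bilinearity. Since all exponents are finite, $C^\infty(\mathbb{T}^d\times[0,T])$ is dense in each $L^{p_i}(0,T;L^{q_i})$; given $\delta>0$ choose smooth $\tilde f,\tilde g$ with $\|f-\tilde f\|_{L^{p_1}(L^{q_1})}$ and $\|g-\tilde g\|_{L^{p_2}(L^{q_2})}$ small. Splitting $R_\varepsilon(f,g)=R_\varepsilon(f-\tilde f,g)+R_\varepsilon(\tilde f,g-\tilde g)+R_\varepsilon(\tilde f,\tilde g)$ and applying the uniform bound to the first two summands and the smooth-data convergence to the third gives $\limsup_{\varepsilon\to0}\|R_\varepsilon(f,g)\|_{L^p(L^q)}\le C\delta$, and letting $\delta\to0$ finishes the proof. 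The only genuinely delicate point is this density step: the commutator is not small for generic rough $f,g$, and it is precisely the uniform bilinear bound---whose finiteness forces the exponent relations $\frac1p=\frac1{p_1}+\frac1{p_2}$ and $\frac1q=\frac1{q_1}+\frac1{q_2}$---that allows one to approximate each slot by smooth functions while keeping the error controlled.
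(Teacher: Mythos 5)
Your proof is correct and complete; note that the paper itself states this lemma without proof, as a ``standard property of the mollifier kernel,'' so there is no in-paper argument for it to diverge from. Your route---the uniform bilinear bound $\|(fg)^\varepsilon-f^\varepsilon g^\varepsilon\|_{L^p(L^q)}\le 2\|f\|_{L^{p_1}(L^{q_1})}\|g\|_{L^{p_2}(L^{q_2})}$ via Young and H\"older, the Constantin--E--Titi identity giving $O(\varepsilon^2)$ for smooth data, and the splitting by bilinearity plus density (which is where the finiteness of all exponents is used)---is precisely the standard argument behind such commutator lemmas, and every step, including the exponent bookkeeping, checks out.
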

 We also recall the general    Constantin-E-Titi type
and Lions type commutators on mollifying kernel proved in \cite{[WY],[NNT]}.
\begin{lemma} \label{lem2.2}   Let $1\leq p,q,p_1,p_2,q_1,q_2\leq \infty$  with $\frac{1}{p}=\frac{1}{p_1}+\frac{1}{p_2}$ and $\frac{1}{q}=\frac{1}{q_1}+\frac{1}{q_2}$. Assume $f\in L^{p_1}(0,T;W^{1,q_1}(\mathbb{T}^d))$ and $g\in L^{p_2}(0,T;L^{q_2}(\mathbb{T}^d))$. Then for any $\varepsilon> 0$, there holds
		\begin{align} \label{fg'}
		\|(fg)^\varepsilon-f^\varepsilon g^\varepsilon\|_{L^p(0,T;L^q(\mathbb{T}^d))}\leq C\varepsilon \|\nabla f\|_{L^{p_1}(0,T;L^{ q_1}( \mathbb{T}^d))}\|g\|_{L^{p_2}(0,T;L^{q_2}(\mathbb{T}^d))}.
		\end{align}
		Moreover, if $q_1,q_2<\infty$ then
		\begin{align}\label{limite'}
		\limsup_{\varepsilon \to 0}\varepsilon^{-1} \|(fg)^\varepsilon-f^\varepsilon g^\varepsilon\|_{L^p(0,T;L^q(\mathbb{T}^d))}=0.
		\end{align}
	\end{lemma}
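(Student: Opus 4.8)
The plan is to reduce both estimates to translation bounds on the torus by means of the classical double-mollification identity, exploiting that the mollification acts only in the space variable. Writing $r_\varepsilon(f,g):=(fg)^\varepsilon-f^\varepsilon g^\varepsilon$ and using $f^\varepsilon(x)g^\varepsilon(x)=\int\eta_\varepsilon(x-y)f^\varepsilon(x)g(y)\,dy$ together with $\int\eta_\varepsilon=1$, I would first record, at a.e.\ fixed time,
$$r_\varepsilon(f,g)(x)=\int_{\mathbb{T}^d}\eta_\varepsilon(x-y)\big(f(y)-f^\varepsilon(x)\big)g(y)\,dy=\iint\eta_\varepsilon(x-y)\eta_\varepsilon(x-z)\big(f(y)-f(z)\big)g(y)\,dy\,dz.$$
After the change of variables $y=x-\varepsilon a$, $z=x-\varepsilon b$, this turns into $\iint\eta(a)\eta(b)\big(f(x-\varepsilon a)-f(x-\varepsilon b)\big)g(x-\varepsilon a)\,da\,db$, where $\eta$ is supported in the unit ball.

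For the bound \eqref{fg'} I would apply the fundamental theorem of calculus, $f(x-\varepsilon a)-f(x-\varepsilon b)=-\varepsilon\int_0^1\nabla f\big(x-\varepsilon b-s\varepsilon(a-b)\big)\cdot(a-b)\,ds$, which produces the explicit prefactor $\varepsilon$. Taking the $L^q_x$ norm and applying the generalized Minkowski inequality to move the norm inside the $a,b,s$ integrals, then Hölder in $x$ with $\frac1q=\frac1{q_1}+\frac1{q_2}$, splits the integrand into a product of a shifted $L^{q_1}$ norm of $\nabla f$ and a shifted $L^{q_2}$ norm of $g$. Translation invariance of the Lebesgue norm on $\mathbb{T}^d$ removes all shifts; since $|a-b|\le 2$ on the support and $\iint\eta(a)\eta(b)\,da\,db=1$, one obtains $\|r_\varepsilon(f,g)(t)\|_{L^q_x}\le C\varepsilon\|\nabla f(t)\|_{L^{q_1}}\|g(t)\|_{L^{q_2}}$ pointwise in $t$, and a final Hölder in time with $\frac1p=\frac1{p_1}+\frac1{p_2}$ yields \eqref{fg'}.

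For the refined statement \eqref{limite'} the extra decay comes from the exact moment cancellation $\iint\eta(a)\eta(b)(a-b)\,da\,db=0$. Dividing the identity by $\varepsilon$ and subtracting the vanishing term $\nabla f(x)\cdot\big(\iint\eta(a)\eta(b)(a-b)\,da\,db\big)g(x)$, I would decompose $\varepsilon^{-1}r_\varepsilon(f,g)$ into one piece carrying the increment $\nabla f(x-\varepsilon b-s\varepsilon(a-b))-\nabla f(x)$ and another carrying $g(x-\varepsilon a)-g(x)$. Estimating each by Hölder exactly as above, the conclusion follows from continuity of translation in $L^{q_1}$ and $L^{q_2}$, i.e.\ $\|\nabla f(\cdot-h)-\nabla f\|_{L^{q_1}}\to0$ and $\|g(\cdot-h)-g\|_{L^{q_2}}\to0$ as $h\to0$, which is exactly where $q_1,q_2<\infty$ enters. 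This gives $\varepsilon^{-1}\|r_\varepsilon(f,g)(t)\|_{L^q_x}\to0$ for a.e.\ $t$, uniformly dominated by $C\|\nabla f(t)\|_{L^{q_1}}\|g(t)\|_{L^{q_2}}$, whose $p$-th power lies in $L^1(0,T)$ by Hölder; dominated convergence in time then delivers \eqref{limite'}.

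The hard part will be the second estimate: the pointwise identity alone yields only the uniform $O(\varepsilon)$ bound, and upgrading it to $o(\varepsilon)$ requires genuinely exploiting the kernel's first-moment cancellation together with the \emph{qualitative} continuity of translations, after which a dominated-convergence argument is needed to pass from a.e.-in-time convergence to convergence of the full space-time norm. Care is needed precisely because translation is not continuous on $L^\infty$, so the argument fails without $q_1,q_2<\infty$; by contrast the time exponents $p_1,p_2$ may be infinite, since only the integrability of the dominating function $\|\nabla f(t)\|_{L^{q_1}}\|g(t)\|_{L^{q_2}}\in L^p(0,T)$ is used.
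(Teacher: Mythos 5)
Your argument is correct, and it is essentially the canonical Constantin--E--Titi proof: note that the paper does not prove Lemma \ref{lem2.2} at all, but recalls it from \cite{[WY],[NNT]}, where precisely your ingredients appear --- the double-mollification identity $(fg)^\varepsilon-f^\varepsilon g^\varepsilon=\iint\eta(a)\eta(b)\bigl(f(x-\varepsilon a)-f(x-\varepsilon b)\bigr)g(x-\varepsilon a)\,da\,db$, the fundamental-theorem-of-calculus expansion producing the factor $\varepsilon$, Minkowski and H\"older with translation invariance, and then an upgrade to $o(\varepsilon)$. The only stylistic divergence is in that upgrade: where you invoke the first-moment cancellation $\iint\eta(a)\eta(b)(a-b)\,da\,db=0$ together with continuity of translations in $L^{q_1}$, $L^{q_2}$, the paper's own proofs of the analogous limits \eqref{2.2} and \eqref{2.4} in Lemma \ref{lem2.1} run a density argument (approximate by smooth $f_n,g_n$, for which the commutator is $O(\varepsilon)$ with a harmless constant, and absorb the remainders); the two mechanisms are equivalent, since continuity of translations in $L^r$, $r<\infty$, is exactly what density of smooth functions encodes.

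One caveat about your closing remark that ``the time exponents $p_1,p_2$ may be infinite.'' Your dominated-convergence step requires $p<\infty$. Since $\frac1p=\frac1{p_1}+\frac1{p_2}$, this is automatic as soon as at least one of $p_1,p_2$ is finite, so your claim is fine in that case; but if $p_1=p_2=\infty$ (hence $p=\infty$), a.e.-in-$t$ convergence dominated in $L^\infty(0,T)$ does not yield convergence of the essential supremum, and indeed \eqref{limite'} then fails: a frequency-concentration family such as $f(t,x)=n_t^{-1}\sin(n_t x_1)$, $g(t,x)=\cos(n_t x_1)$, with $n_t$ ranging over all large integers, is bounded in $L^\infty_t W^{1,q_1}_x\times L^\infty_t L^{q_2}_x$, yet at frequency $n_t\sim\varepsilon^{-1}$ the commutator's coefficient involves $\widehat{\eta}(2\varepsilon n)-\widehat{\eta}(\varepsilon n)^2$, which cannot vanish identically (the identity $\widehat{\eta}(2\xi)=\widehat{\eta}(\xi)^2$ would force a Gaussian profile, impossible for compactly supported $\eta$), so $\varepsilon^{-1}\|(fg)^\varepsilon-f^\varepsilon g^\varepsilon\|_{L^\infty_t L^q_x}$ stays bounded below. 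So the limit statement should be read with $p<\infty$; every application in the paper has finite time exponents, so nothing is lost.
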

The following two lemmas plays an    important role in the proof of Theorem \ref{the1.2}.
We refer the reader to \cite{[NNT],[Liang]}
  for their original version.
\begin{lemma} \label{lem2.1}
Suppose that $f\in L^{p}(0,T;L^{q}(\mathbb{T}^{d}))$. Then for any $\varepsilon>0$, there holds
\be\label{2.1}
\|\nabla f^{\varepsilon}\|_{L^{p}(0,T;L^{q}(\mathbb{T}^{d}))}
\leq C\varepsilon^{-1}\| f\|_{L^{p}(0,T;L^{q}(\mathbb{T}^{d}))},
\ee
and, if $p,q<\infty$
\begin{equation}\label{2.2}
\limsup_{\varepsilon\rightarrow0} \varepsilon\|\nabla f^{\varepsilon}\|_{L^{p}(0,T;L^{q}(\mathbb{T}^{d}))}=0.
\end{equation}
Moreover, if $0<c_{1}\leq g\in L^k(0,T;L^{l}(\mathbb{T}^d))$, then there holds, for any $\varepsilon>0$,
\begin{equation}\label{2.3}\begin{aligned}
\B\|\nabla \f{f^{\varepsilon}}{g^{\varepsilon}}\B\|_{L^{\frac{pk}{k+p}}(0,T;L^{\frac{ql}{l+q}}(\mathbb{T}^{d}))}\leq C
\varepsilon^{-1}\|f\|_{L^{p}(0,T;L^{q}(\mathbb{T}^{d}))}\|g\|_{L^k(0,T;L^{l}(\mathbb{T}^d))},
\end{aligned}\end{equation}
and if $1\leq q,l<\infty$
\begin{equation}\label{2.4}
\limsup_{\varepsilon\rightarrow0} \varepsilon\B\|\nabla \f{f^{\varepsilon}}{g^{\varepsilon}}\B\|_{L^{\frac{pk}{k+p}}(0,T;L^{\frac{ql}{l+q}}(\mathbb{T}^{d}))}
=0.\end{equation}
\end{lemma}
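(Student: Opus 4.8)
The plan is to prove the four assertions in turn, all resting on the representation $\nabla f^{\varepsilon}=f\ast\nabla\eta_{\varepsilon}$ together with the two scaling facts $\|\nabla\eta_{\varepsilon}\|_{L^{1}}=C\varepsilon^{-1}$ and $\int_{\mathbb{T}^{d}}\nabla\eta_{\varepsilon}\,dx=0$ (the cancellation being forced by the compact support of $\eta$). For the first inequality \eqref{2.1} I would apply Young's convolution inequality in the space variable, so that $\|\nabla f^{\varepsilon}(\cdot,t)\|_{L^{q}}\le\|f(\cdot,t)\|_{L^{q}}\|\nabla\eta_{\varepsilon}\|_{L^{1}}\le C\varepsilon^{-1}\|f(\cdot,t)\|_{L^{q}}$, and then take the $L^{p}$ norm in $t$.

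For \eqref{2.2} the decisive point is the cancellation $\int\nabla\eta_{\varepsilon}=0$. After the change of variables $y=\varepsilon z$ one obtains $\varepsilon\nabla f^{\varepsilon}(x,t)=\int(f(x-\varepsilon z,t)-f(x,t))(\nabla\eta)(z)\,dz$, so Minkowski's integral inequality gives $\|\varepsilon\nabla f^{\varepsilon}(\cdot,t)\|_{L^{q}}\le\int\|f(\cdot-\varepsilon z,t)-f(\cdot,t)\|_{L^{q}}\,|\nabla\eta(z)|\,dz$. Since $q<\infty$, continuity of translation in $L^{q}$ makes the integrand vanish pointwise in $z$ as $\varepsilon\to0$, while it is dominated by $2\|f(\cdot,t)\|_{L^{q}}\|\nabla\eta\|_{L^{1}}$; dominated convergence in $z$, and then (using $p<\infty$ and the $L^{p}(0,T)$ majorant $C\|f(\cdot,t)\|_{L^{q}}$) in $t$, yields \eqref{2.2}.

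For \eqref{2.3} the new ingredient is the lower bound $g^{\varepsilon}\ge c_{1}$, immediate from $g\ge c_{1}$ and $\int\eta_{\varepsilon}=1$. Differentiating the quotient, $\nabla\!\big(f^{\varepsilon}/g^{\varepsilon}\big)=\nabla f^{\varepsilon}/g^{\varepsilon}-f^{\varepsilon}\nabla g^{\varepsilon}/(g^{\varepsilon})^{2}$, I would estimate the two terms by Hölder's inequality adapted to the conjugate splittings $\tfrac{k+p}{pk}=\tfrac1p+\tfrac1k$ and $\tfrac{l+q}{ql}=\tfrac1q+\tfrac1l$. The first term uses $|1/g^{\varepsilon}|\le c_{1}^{-1}$ and the finite measure of $(0,T)\times\mathbb{T}^{d}$ to bound $\|1/g^{\varepsilon}\|_{L^{k}(L^{l})}\le c_{1}^{-1}T^{1/k}|\mathbb{T}^{d}|^{1/l}$, which is $\le c_{1}^{-2}\|g\|_{L^{k}(L^{l})}$ because $g\ge c_{1}$ forces $\|g\|_{L^{k}(L^{l})}\ge c_{1}T^{1/k}|\mathbb{T}^{d}|^{1/l}$; combined with \eqref{2.1} this gives the required bound. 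The second term uses $|1/(g^{\varepsilon})^{2}|\le c_{1}^{-2}$, the contraction $\|f^{\varepsilon}\|_{L^{p}(L^{q})}\le\|f\|_{L^{p}(L^{q})}$, and \eqref{2.1} applied to $g$ to control $\|\nabla g^{\varepsilon}\|_{L^{k}(L^{l})}\le C\varepsilon^{-1}\|g\|_{L^{k}(L^{l})}$, which produces exactly the product structure $C\varepsilon^{-1}\|f\|_{L^{p}(L^{q})}\|g\|_{L^{k}(L^{l})}$.

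Finally, \eqref{2.4} follows by inserting the factor $\varepsilon$ into the same decomposition: the first term is $\le c_{1}^{-2}\|g\|_{L^{k}(L^{l})}\,\varepsilon\|\nabla f^{\varepsilon}\|_{L^{p}(L^{q})}$, which tends to $0$ by \eqref{2.2} (using $q<\infty$), and the second is $\le c_{1}^{-2}\|f\|_{L^{p}(L^{q})}\,\varepsilon\|\nabla g^{\varepsilon}\|_{L^{k}(L^{l})}$, which tends to $0$ by \eqref{2.2} applied to $g$ (using the finiteness of the spatial exponent $l$ and of the time exponent). The step I expect to require the most care is \eqref{2.4}: one must verify that the quotient rule together with the uniform lower bound on $g^{\varepsilon}$ genuinely reduces the vanishing of $\varepsilon\nabla(f^{\varepsilon}/g^{\varepsilon})$ to the two separate statements for $f$ and for $g$, and one must check that the finiteness hypotheses under which \eqref{2.2} was established are in fact available for \emph{both} functions.
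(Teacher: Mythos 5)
Your proofs of \eqref{2.1} and \eqref{2.2} are the standard ones (the paper simply cites \cite{[NNT]} for these and does not reprove them), and your proof of \eqref{2.3} is correct, though by a slightly different route from the paper's: where the paper keeps the factor $g^\varepsilon$, proves the pointwise convolution bound $|g^\varepsilon\nabla f^\varepsilon|\le C\varepsilon^{-1}\big(|g|\ast J_{1\varepsilon}\big)\big(|f|\ast J_{1\varepsilon}\big)$ and then applies H\"older and Young to get $\|g^\varepsilon\nabla f^\varepsilon\|_{L^{ql/(l+q)}}\le C\varepsilon^{-1}\|g\|_{L^{l}}\|f\|_{L^{q}}$ (see \eqref{2.5}--\eqref{2.9}), you instead manufacture the factor $\|g\|_{L^k(L^l)}$ from the uniform lower bound together with the finiteness of the space-time domain, via $\|1/g^\varepsilon\|_{L^k(L^l)}\le c_1^{-1}T^{1/k}|\mathbb{T}^d|^{1/l}\le c_1^{-2}\|g\|_{L^k(L^l)}$. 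That trick is legitimate on $(0,T)\times\mathbb{T}^d$ and is arguably simpler than the paper's computation; both yield exactly \eqref{2.3}.

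The genuine gap is in \eqref{2.4}, and it sits precisely at the point you flagged at the end. The hypothesis of \eqref{2.4} is only $1\le q,l<\infty$: the temporal exponents $p,k$ are allowed to be infinite. Your argument reduces \eqref{2.4} to \eqref{2.2} applied to $f$ with exponents $(p,q)$ and to $g$ with exponents $(k,l)$, and \eqref{2.2} requires \emph{both} of its exponents to be finite; it genuinely fails when the time exponent is infinite (take $f$ oscillating at spatial scale comparable to $t$: at times $t\sim\varepsilon$ one has $\varepsilon\|\nabla f^\varepsilon(t,\cdot)\|_{L^q}$ of order one, so $\varepsilon\|\nabla f^\varepsilon\|_{L^\infty_tL^q_x}$ does not vanish). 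Hence your proof establishes \eqref{2.4} only under the extra assumption $p,k<\infty$. The paper avoids \eqref{2.2} altogether here and instead runs a density argument purely in the \emph{space} variable: approximating $f$ by smooth $f_n$ in $L^q$ and $g$ by $g_n$ in $L^l$, it splits $\varepsilon\nabla(f^\varepsilon/g^\varepsilon)$ into error pieces controlled by the $O(\varepsilon^{-1})$ bound \eqref{2.9} times $\|f-f_n\|_{L^q}$ or $\|g-g_n\|_{L^l}$, plus smooth pieces such as $\varepsilon\|g^\varepsilon\nabla f_n^\varepsilon\|_{L^{ql/(l+q)}}\le\varepsilon\|g\|_{L^l}\|\nabla f_n\|_{L^q}$ that vanish trivially as $\varepsilon\to0$ (see \eqref{2.11}--\eqref{2.12}); this gives $\limsup_{\varepsilon\to0}\varepsilon\|\nabla(f^\varepsilon/g^\varepsilon)(\cdot,t)\|_{L^{ql/(l+q)}}=0$ at fixed $t$, and the time norm is taken afterwards, in effect by dominated convergence with the $\varepsilon$-uniform majorant $C\|f(t)\|_{L^q}\|g(t)\|_{L^l}$ supplied by \eqref{2.9} (strictly speaking this step also wants the mixed time exponent $\frac{pk}{k+p}$ finite, i.e.\ not both $p=k=\infty$, as the counterexample above shows). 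This is exactly why the lemma assumes finiteness only of the spatial exponents. To close your proof, replace the appeal to \eqref{2.2} by such a spatial density argument, or else weaken the statement by adding the hypothesis $p,k<\infty$.
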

\begin{proof}
	The proof of \eqref{2.1} and \eqref{2.2} was given in \cite{[NNT]}, hence we only focus on the proof of \eqref{2.3}-\eqref{2.4} here.
	
	By direct calculation and using the lower bound of $g$, we have
	\begin{equation}\label{2.5}
		\begin{aligned}
			\B\|\nabla (\frac{f^\varepsilon}{g^\varepsilon})\B\|_{L^{\frac{ql}{l+q}}}&\leq C\left(\B\|\frac{g^\varepsilon}{(g^\varepsilon)^2}\nabla f^\varepsilon\B\|_{L^{\frac{ql}{l+q}}}+\B\|f^\varepsilon\frac{\nabla g^\varepsilon}{(g^\varepsilon)^2}\B\|_{L^{\frac{ql}{l+q}}}\right)\\
			&\leq C\left(\|g^\varepsilon \nabla f^\varepsilon\|_{L^{\frac{ql}{l+q}}}+\|f^\varepsilon \nabla g^\varepsilon\|_{L^{\frac{ql}{l+q}}}\right).\\
		\end{aligned}
	\end{equation}
Then we need to deal with the two terms on the right-hand side of the inequality \eqref{2.5}, since
\begin{equation}\label{2.6}
	\begin{aligned}
	g^\varepsilon\nabla f^\varepsilon&=\int \frac{1}{\varepsilon^d}	g(y)\eta(\frac{x-y}{\varepsilon})dy \int \frac{1}{\varepsilon^d}f(y)\nabla \eta (\frac{x-y}{\varepsilon})\frac{1}{\varepsilon}dy\\
	&\leq C{\varepsilon}^{-1}\int_{B(x,\varepsilon)}\frac{1}{\varepsilon^d}|g(y)|dy \int_{B(x,\varepsilon)}\frac{1}{\varepsilon^d}|f(y)|dy\\
	&\leq C{\varepsilon}^{-1}\left(\int_{B(0,\varepsilon)}|g(x-z)|\frac{{\bf{1}}|_{B(0,\varepsilon)}(z)}{\varepsilon^d}dz\right)\left(\int_{B(0,\varepsilon)}|f(x-z)|\frac{{\bf{1}}|_{B(0,\varepsilon)}(z)}{\varepsilon^d}dz\right)\\
	&\leq C\varepsilon^{-1}\left(|g|*J_{1\varepsilon}(x)\right)\left(|f|*J_{1\varepsilon}(x)\right),
	\end{aligned}
\end{equation}
where $J_{1\varepsilon}=\frac{{\bf{1}}|_{B(0,\varepsilon)}(z)}{\varepsilon^d}\geq 0$ and $\int_{\mathbb{R}^d} J_{1\varepsilon}(z) dz=$measure of $B(0,1)$.
Then it follows from the H\"older's inequality and Minkowski's inequality, one has
\begin{equation}\label{2.7}
	\begin{aligned}
	\|g^\varepsilon \nabla f^\varepsilon\|_{L^{\frac{ql}{l+q}}}&\leq C\varepsilon^{-1}\|	\left(|g|*J_{1\varepsilon}(x)\right)\left(|f|*J_{1\varepsilon}(x)\right)\|_{L^{\frac{ql}{l+q}}}\\
	&\leq C\varepsilon^{-1}\||g|*J_{1\varepsilon}\|_{L^{l}}\||f|*J_{1\varepsilon}\|_{L^q}\\
	&\leq C\varepsilon^{-1}\|g\|_{L^{l}}\|f\|_{L^q}.
	\end{aligned}
\end{equation}
Similarly, we also have
\begin{equation}\label{2.8}
	\begin{aligned}
		\B\|f^\varepsilon \nabla g^\varepsilon\B\|_{L^{\frac{ql}{l+q}}}\leq C\varepsilon^{-1}\|f\|_{L^q}\|g\|_{L^{l}}.
	\end{aligned}
\end{equation}
Substituting \eqref{2.7} and \eqref{2.8} into \eqref{2.5}, one can obtain
\begin{equation}\label{2.9}
	\begin{aligned}
		\B\|\nabla \left(\frac{f^\varepsilon}{g^\varepsilon}\right)\B\|_{L^{\frac{ql}{l+q}}}\leq C\varepsilon^{-1}\|f\|_{L^q}\|g\|_{L^{l}}.
	\end{aligned}
\end{equation}
It follows from H\"older's inequality that
\begin{equation}\label{2.10}
\B\|\nabla \left(\frac{f^\varepsilon}{g^\varepsilon}\right)\B\|_{L^{\frac{pk}{k+p}}(0,T;L^{\frac{ql}{l+q}})}\leq C\varepsilon^{-1}\|f\|_{L^p(0,T;L^q)}\|g\|_{L^k(0,T;L^{l})}.
\end{equation}
Furthermore, if $1\leq q, l<\infty$, let $\{f_n\}, \{g_n\}\in C_{0}^\infty (\mathbb{T}^d)$ with $f_n\rightarrow f,\ g_n\rightarrow g$ strongly in $L^q(\mathbb{T}^d)$ and $L^{l}(\mathbb{T}^d)$, respectively. Thus, by the density arguments, we find that
\begin{equation}\label{2.11}
	\begin{aligned}
		&\varepsilon\B\|\nabla \left(\frac{f^\varepsilon}{g^\varepsilon}\right)\B\|_{L^{\frac{ql}{l+q}}}\leq C\varepsilon\left(\B\|\nabla \left(\frac{(f-f_n)^\varepsilon}{g^\varepsilon}\right)\B\|_{L^{\frac{ql}{l+q}}}+\B\|\nabla \left(\frac{f_n^\varepsilon}{g^\varepsilon}\right)\B\|_{L^{\frac{ql}{l+q}}}\right)\\
		\leq C&\varepsilon\left(\varepsilon^{-1}\|f-f_n\|_{L^q}\|g\|_{L^{l}}+\|g^\varepsilon \nabla f_n^\varepsilon\|_{L^{\frac{ql}{l+q}}}+\|f_n^\varepsilon \nabla (g-g_n)^\varepsilon\|_{L^{\frac{ql}{l+q}}}+\|f_n^\varepsilon \nabla g_n^\varepsilon\|_{L^{\frac{ql}{l+q}}}\right)\\
		\leq C & \left(\|f-f_n\|_{L^q}\|g\|_{L^{l}}+\varepsilon\|g\|_{L^{l}}\|\nabla f_n\|_{L^q}+\|f_n\|_{L^q}\|g-g_n\|_{L^{l}}+\varepsilon\|f_n\|_{L^q}\|\nabla g_n\|_{L^{l}}\right),
	\end{aligned}
\end{equation}
which gives
\begin{equation}\label{2.12}
	\begin{aligned}
	\limsup_{\varepsilon\rightarrow 0}&\varepsilon \B\|\nabla \left(\frac{f^\varepsilon}{g^\varepsilon}\right)\B\|_{L^{\frac{ql}{l+q}}}\\
\leq	& C\left(\|f-f_n\|_{L^q}\|g\|_{L^{l}}++\|f_n\|_{L^q}\|g-g_n\|_{L^{l}}\right)\rightarrow 0,\ as\ n\rightarrow +\infty.
	\end{aligned}
\end{equation}
Then taking $L^{\frac{pk}{k+p}}$ norm with respect to $t$ on \eqref{2.12} and using the H\"older's inequality, it leads to \eqref{2.4}.
\end{proof}
 \begin{lemma}\label{lem2.3}
		Assume that $0<c\leq \varrho (x,t)\in L^{l}(\mathbb{T}^d)$ and $\nabla v\in L^q(\mathbb{T}^d)$ with $l\geq \frac{2q}{q-1},\ 1\leq q\leq \infty$. Then
		\begin{equation}\label{b1}
\B\|\partial\left(\frac{(\varrho v)^\varepsilon}{\varrho ^\varepsilon}\right)\B\|_{L^{\frac{ql}{2q+l}}(\mathbb{T}^d)}\leq C\|\nabla v\|_{L^q(\mathbb{T}^d)}\left(\|\varrho\|_{L^{\frac{l}{2}}(\mathbb{T}^d)}+\|\varrho\|_{L^{l}(\mathbb{T}^d)}^2\right).
		\end{equation}
	\end{lemma}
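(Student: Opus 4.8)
The plan is to set $u^\varepsilon:=(\varrho v)^\varepsilon/\varrho^\varepsilon$ and to extract a cancellation that removes the factor $\varepsilon^{-1}$ that a direct application of Lemma \ref{lem2.1} would produce. Since $\eta\ge0$ and $\int\eta=1$, the lower bound passes to the mollification, $\varrho^\varepsilon\ge c$ pointwise, so every denominator below is harmless. Abbreviating $\mathcal A:=(\varrho v)^\varepsilon$, the quotient rule gives $\partial u^\varepsilon=\partial\mathcal A/\varrho^\varepsilon-\mathcal A\,\partial\varrho^\varepsilon/(\varrho^\varepsilon)^2$, where $\partial$ is any spatial partial derivative. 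For fixed $x$ I would freeze the value $v(x)$: using $v(y)=(v(y)-v(x))+v(x)$ together with $\int\varrho(y)\eta_\varepsilon(x-y)\,dy=\varrho^\varepsilon$ and $\int\varrho(y)\partial\eta_\varepsilon(x-y)\,dy=\partial\varrho^\varepsilon$, one splits $\mathcal A=\widetilde{\mathcal A}+v(x)\varrho^\varepsilon$ and $\partial\mathcal A=\widetilde{\partial\mathcal A}+v(x)\partial\varrho^\varepsilon$, with $\widetilde{\mathcal A}=\int\varrho(y)(v(y)-v(x))\eta_\varepsilon(x-y)\,dy$ and $\widetilde{\partial\mathcal A}=\int\varrho(y)(v(y)-v(x))\partial\eta_\varepsilon(x-y)\,dy$. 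Substituting, the two contributions $v(x)\,\partial\varrho^\varepsilon/\varrho^\varepsilon$ cancel exactly, leaving the key identity $\partial u^\varepsilon=\widetilde{\partial\mathcal A}/\varrho^\varepsilon-\widetilde{\mathcal A}\,\partial\varrho^\varepsilon/(\varrho^\varepsilon)^2=:\mathrm I-\mathrm{II}$.

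The gain is that $\mathrm I$ and $\mathrm{II}$ now carry the increment $v(y)-v(x)$, which I would control by the pointwise bound $|v(y)-v(x)|\le|y-x|\int_0^1|\nabla v(x+s(y-x))|\,ds$, valid a.e. for $v\in W^{1,q}(\mathbb T^d)$ by the smooth approximation already used in the proof of Lemma \ref{lem2.1}. With $|y-x|<\varepsilon$ on the support and the elementary bounds $\eta_\varepsilon\le C\varepsilon^{-d}\mathbf 1_{B(0,\varepsilon)}$, $|\partial\eta_\varepsilon|\le C\varepsilon^{-d-1}\mathbf 1_{B(0,\varepsilon)}$, the extra $\varepsilon$ from the increment cancels the $\varepsilon^{-1}$ from $\partial\eta_\varepsilon$. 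Thus $|\mathrm I(x)|\le \frac{C}{c}\varepsilon^{-d}\int_{B(x,\varepsilon)}\varrho(y)\int_0^1|\nabla v(x+s(y-x))|\,ds\,dy$, which is \emph{linear} in $\varrho$, while in $\mathrm{II}$ the same cancellation occurs inside $\widetilde{\mathcal A}$ and one is left with a product of two $\varrho$-averages, namely $|\mathrm{II}(x)|\le \frac{C}{c^2}\varepsilon^{-2d}\big(\int_{B(x,\varepsilon)}\varrho\,\big)\big(\int_{B(x,\varepsilon)}\varrho(y)\int_0^1|\nabla v(x+s(y-x))|\,ds\,dy\big)$, which is \emph{quadratic} in $\varrho$.

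To conclude I would take the $L^{ql/(2q+l)}$ norm in $x$, noting that the target exponent $r:=\tfrac{ql}{2q+l}$ obeys $\tfrac1r=\tfrac2l+\tfrac1q$. After the substitution $y=x-a$, $|a|<\varepsilon$ (so $x+s(y-x)=x-sa$), Minkowski's integral inequality moves the $L^r_x$-norm inside the $a$- and $s$-integrals and H\"older applies. The single density factor in $\mathrm I$ must then be measured in $L^{l/2}$ — this is the source of the exponent $2/l$ — and, once the surviving volume $|B(0,\varepsilon)|\sim\varepsilon^d$ absorbs the $\varepsilon^{-d}$, yields $\|\mathrm I\|_{L^r}\le \frac{C}{c}\|\varrho\|_{L^{l/2}}\|\nabla v\|_{L^q}$. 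For $\mathrm{II}$ I would estimate $\varrho*\mathbf 1_{B(0,\varepsilon)}$ in $L^l$ by Young's inequality and the remaining factor by Minkowski together with H\"older, giving $\|\mathrm{II}\|_{L^r}\le \frac{C}{c^2}\|\varrho\|_{L^l}^2\|\nabla v\|_{L^q}$. Adding the two estimates produces \eqref{b1}. The hypothesis $l\ge\frac{2q}{q-1}$ is used exactly here: it is equivalent to $\tfrac2l+\tfrac1q\le1$, i.e. $r\ge1$, which (together with the resulting intermediate exponents being $\ge1$) is what makes the Minkowski and H\"older steps legitimate.

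The step I expect to be delicate is the exact cancellation of the frozen-value terms $v(x)\partial\varrho^\varepsilon/\varrho^\varepsilon$ in the first paragraph: absent this cancellation, each half of the quotient rule keeps a genuine factor $\varepsilon^{-1}$ and no $\varepsilon$-independent bound can hold. After the identity $\partial u^\varepsilon=\mathrm I-\mathrm{II}$ is secured, the rest is bookkeeping of $\varepsilon$-powers — each increment contributes $+1$ and each $\partial\eta_\varepsilon$ contributes $-1$ — and a judicious choice of H\"older exponents so that the linear term $\mathrm I$ is charged to $\|\varrho\|_{L^{l/2}}$ and the quadratic term $\mathrm{II}$ to $\|\varrho\|_{L^l}^2$.
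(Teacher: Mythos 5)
Your proposal is correct and essentially reproduces the paper's argument: your frozen-value cancellation giving $\partial u^\varepsilon=\mathrm I-\mathrm{II}$ is precisely the paper's identity \eqref{b3} splitting the derivative into $I_1+I_2$, your fundamental-theorem bound on $|v(y)-v(x)|$ is the paper's mean value step \eqref{b5}, and the linear-in-$\varrho$ term is charged to $\|\varrho\|_{L^{l/2}}$ and the quadratic one to $\|\varrho\|_{L^{l}}^2$ exactly as there. The only cosmetic difference is the final bookkeeping: the paper applies H\"older in $y$ with exponents $s_1\le q$, $2s_2\le l$ and then Young's convolution inequality with the kernels $J_\varepsilon, J_{1\varepsilon}$, whereas you use Minkowski's integral inequality followed by H\"older in $x$ --- equivalent implementations justified by the same condition $l\ge \frac{2q}{q-1}$, i.e.\ $\tfrac{2}{l}+\tfrac{1}{q}\le 1$.
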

\begin{proof}
	By direct computation, one has
	\begin{equation}\label{b3}
	\partial\left(\frac{(\varrho v)^\varepsilon}{\varrho ^\varepsilon}\right)=\frac{\partial (\varrho v)^\varepsilon-v\partial \varrho ^\varepsilon}{\varrho^\varepsilon}-\frac{\left((\varrho v)^\varepsilon-\varrho ^\varepsilon v\right)\partial \varrho ^\varepsilon}{(\varrho ^\varepsilon)^2}:=I_1+I_2.
	\end{equation}	
	Let $B(x,\varepsilon)=\{y\in \mathbb{T}^d; |x-y|<\varepsilon\}$, then Using the H\"older's inequality, we have
	\begin{equation}\label{b4}
	\begin{aligned}
	|I_1|&\leq C|\int \varrho(y)\left(v(y)-v(x)\right)\nabla_x\eta_\varepsilon(x-y)dy|\\
	&\leq C|\int_{\mathbb{R}^d}\varrho(y)\frac{v(y)-v(x)}{\varepsilon}\frac{1}{\varepsilon^d}\nabla \eta (\frac{x-y}{^\varepsilon})dy|\\
	&\leq C\left(\frac{1}{\varepsilon^d}\int _{B(x,\varepsilon)}\frac{|v(y)-v(x)|^{s_1}}{\varepsilon^{s_1}}dy\right)^{\frac{1}{s_1}}\left(\frac{1}{\varepsilon^d}\int_{B(x,\varepsilon)}|\varrho(y)|^{s_2}dy\right)^{\frac{1}{s_2}},\\
	\end{aligned}
	\end{equation}	
where $s_1\leq q,\ 2s_2\leq l$ and $\frac{1}{s_1}+\frac{1}{s_2}=1$.

	Using the mean value theorem, one has
	\begin{equation}\label{b5}
	\begin{aligned}
	\frac{1}{\varepsilon^d}\int_{B(x,\varepsilon)}\frac{|v(y)-v(x)|^{s_1}}{\varepsilon^{s_1}}dy&\leq C\frac{1}{\varepsilon^d}\int_{B(x,\varepsilon)}\int_0^1 |\nabla v(x+(y-x)s)|^{s_1}\frac{|y-x|^{s_1}}{\varepsilon^{s_1}}dsdy\\
	&\leq C \int_0^1\int_{B(0,1)}|\nabla v(x+s\varepsilon \omega)|^{s_1}d\omega ds\\
	&\leq C\int _{\mathbb{R}^d}|\nabla v(x-z)|^{s_1}\int_0^1\frac{{\bf{1}}_{B(0,\varepsilon s)}(z)}{(\varepsilon s)^d} ds dz\\
	&=|\nabla v|^{s_1} *J_\varepsilon (x),
	\end{aligned}
	\end{equation}	
	where $J_\varepsilon(z)=\int_0^1 \frac{{\bf{1}}_{B (0,\varepsilon s)}(z)}{(\varepsilon s)^d} ds dz\geq 0$ and it's easy to check that $\int_{\mathbb{R}^d} J_\varepsilon dz=measure\  of\  (B(0,1))$.
	Similarly, we also have
	\begin{equation}\label{b5-1}
		\begin{aligned}
			\frac{1}{\varepsilon^d}\int_{B(x,\varepsilon)}|\varrho(y)|^{s_2}dy&\leq C\int_{B(0,\varepsilon)}|\varrho(x-z)|^{s_2} dz\\
			&\leq C\int_{\mathbb{R}^d}|\varrho(x-z)|^{s_2}\frac{{\bf{1}}_{B (0,\varepsilon )}(z)}{(\varepsilon )^d}dz\\
			&\leq C|\varrho|^{s_2}*J_{1\varepsilon}(x),
		\end{aligned}
	\end{equation}
where $J_{1\varepsilon}=\frac{\bf{1}_{B(0,\varepsilon )}(z)}{(\varepsilon )^d}\geq 0$ and $\int_{\mathbb{R}^d} J_{1\varepsilon}(z)dz =measure\ of \  (B(0,1))$.

	Next, to estimate $I_2$, due to the H\"older's inequality, one deduces
	\begin{equation}\label{b6}
	\begin{aligned}
	|I_2|&=|\int \varrho(y)\left(v(y)-v(x)\right)\eta_\varepsilon(x-y)dy \frac{\int \varrho(y)\nabla_x \eta_\varepsilon(x-y)dy}{\left(\int \varrho(y)\eta_\varepsilon(x-y)dy\right)^2}|\\
	&\leq C\int_{B(x,\varepsilon)} \varrho(y) |v(y)-v(x)|\frac{1}{\varepsilon^d}dy\int_{B(x,\varepsilon)} \frac{1}{\varepsilon^d}\varrho (y)|\nabla \eta(\frac{x-y}{\varepsilon})|\frac{1}{\varepsilon} dy\\
	&\leq C \left(\frac{1}{\varepsilon^d}\int _{B(x,\varepsilon)}\frac{|v(y)-v(x)|^{s_1}}{\varepsilon^{s_1}}dy\right)^{\frac{1}{s_1}}\left(\frac{1}{\varepsilon^d}\int_{B(x,\varepsilon)}|\varrho(y)|^{s_2}dy \right)^{\frac{2}{s_2}}.
	\end{aligned}
	\end{equation}
	Therefor, by the same arguments as in \eqref{b5} and \eqref{b5-1}, in combination with \eqref{b3}-\eqref{b6}, we have
	\begin{equation}\label{b7}
	|I_1|+|I_2|\leq C\left(\nabla v|^{s_1} *J_\varepsilon (z)\right)^{\frac{1}{s_1}}\left(\left(\varrho^{s_2}*J_{1\varepsilon}\right)^{\frac{1}{s_2}}+\left(\varrho^{s_2}*J_{1\varepsilon}\right)^{\frac{2}{s_2}}\right).
	\end{equation}	
	Then from the Young's inequality, we arrive at
	\begin{equation}
	\begin{aligned}
	&\B\|\partial \frac{(\varrho v)^{\varepsilon}}{\varrho ^\varepsilon}\B\|_{{L^\frac{ql}{2q+l}}(\mathbb{T}^d)}\\
	\leq& C\|\left(|\nabla v|^{s_1}*J_\varepsilon(z)\right)^{\frac{1}{s_1}}\|_{L^{q}}\left(\|\left(|\varrho(y)|^{s_2}*J_{1\varepsilon}\right)^{\frac{1}{s_2}}\|_{L^{\frac{l}{2}}}+\|\left(|\varrho(y)|^{s_2}*J_{1\varepsilon}\right)^{\frac{2}{s_2}}\|_{L^{\frac{l}{2}}}\right)\\
	\leq& C\|\nabla v\|_{L^q}\|J_\varepsilon\|_{L^1}^{\frac{1}{s_1}}\left(\|\varrho\|_{L^{\frac{l}{2}}}\|J_{1\varepsilon}\|_{L^1}^{\frac{1}{s_2}}+\|\varrho \|_{L^{l}}^{2}\|J_{1\varepsilon}\|_{L^1}^{\frac{2}{s_1}}\right)\\
	\leq &  C\|\nabla v\|_{L^q}\left(\|\varrho\|_{L^{\frac{l}{2}}}+\|\varrho\|_{L^{l}}^2\right).
	\end{aligned}
	\end{equation}
	Then we have completed the proof of lemma \ref{lem2.3}.
	\end{proof}
On the other hand, if we let $\eta$ be non-negative   smooth function supported in the space-time ball of radius 1 and its integral equals to 1, we define the rescaled space-time mollifier $\eta_{\varepsilon}(t,x)=\frac{1}{\varepsilon^{d+1}}\eta(\frac{t}{\varepsilon},\frac{x}{\varepsilon})$ and
$$
f^{\varepsilon}(t,x)=\int_{0}^{T}\int_{\mathbb{T}^d}f(\tau,y)\eta_{\varepsilon}(t-\tau,x-y)dyd\tau.
$$
We list two lemmas for the proof of Theorem \eqref{the1.5}.
The fist one  is the  Lions type commutators on space-time mollifying kernel.
The second one is  the generalized Aubin-Lions Lemma, which helps us  to extend the energy equality up
to the initial time.
\begin{lemma} (\cite{[YWW],[LV],[CY]})
	\label{pLions}Let $1\leq p,q,p_1,q_1,p_2,q_2\leq \infty$,  with $\frac{1}{p}=\frac{1}{p_1}+\frac{1}{p_2}$ and $\frac{1}{q}=\frac{1}{q_1}+\frac{1}{q_2}$.
	Let $\partial$ be a partial derivative in space or time, in addition, let  $\partial_t f,\ \nabla f \in L^{p_1}(0,T;L^{q_1}(\Omega))$, $g\in L^{p_2}(0,T;L^{q_2}(\Omega))$.   Then, there holds $$\|{\partial(fg)^\varepsilon}-\partial(f\,{g}^\varepsilon)\|_{L^p(0,T;L^q(\Omega))}\leq C\left(\|\partial_{t} f\|_{L^{p_{1}}(0,T;L^{q_{1}}(\Omega))}+\|\nabla f\|_{L^{p_{1}}(0,T;L^{q_{1}}(\Omega))}\right)\|g\|_{L^{p_{2}}(0,T;L^{q_{2}}(\Omega))},
	$$
	for some constant $C>0$ independent of $\varepsilon$, $f$ and $g$. Moreover, $${\partial{(fg)^\varepsilon}}-\partial{(f\,{g^\varepsilon})}\to 0\quad\text{ in } {L^{p}(0,T;L^{q}(\Omega))},$$
	as $\varepsilon\to 0$ if $p_2,q_2<\infty.$
\end{lemma}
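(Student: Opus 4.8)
The plan is to reduce the commutator to an explicit convolution integral and then treat its two pieces separately: a uniform bound obtained from a first-order Taylor expansion of $f$, and the passage to the limit obtained by a density reduction on the $g$-side. Writing $z=(t,x)$ for the space-time variable and using that $\partial$ commutes with mollification, so that $\partial(fg)^\varepsilon=(fg)*\partial\eta_\varepsilon$ while $\partial(f g^\varepsilon)=(\partial f)(g*\eta_\varepsilon)+f(g*\partial\eta_\varepsilon)$, the first step is to record the identity
\[
\partial(fg)^\varepsilon-\partial(f g^\varepsilon)=\underbrace{\int\big[f(z-w)-f(z)\big]\,g(z-w)\,\partial\eta_\varepsilon(w)\,dw}_{A_\varepsilon}\;-\;\underbrace{(\partial f)\,(g*\eta_\varepsilon)}_{B_\varepsilon},
\]
where $A_\varepsilon$ arises by grouping the first and third terms and using $\int\partial\eta_\varepsilon=0$ through the factor $f(z)$. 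Here $\eta_\varepsilon$ is the space-time mollifier, and $\partial$ may be a spatial or the time derivative; the point of the full space-time convolution is that the expansion below always produces the \emph{whole} gradient $\nabla=(\partial_t,\nabla_x)$, which is exactly why both $\partial_t f$ and $\nabla f$ appear on the right of the asserted estimate.

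For the uniform bound I would rescale $w=\varepsilon y$, so $\partial\eta_\varepsilon(w)\,dw=\varepsilon^{-1}(\partial\eta)(y)\,dy$, and apply the fundamental theorem of calculus in the full space-time variable, $f(z-\varepsilon y)-f(z)=-\varepsilon\int_0^1\nabla f(z-s\varepsilon y)\cdot y\,ds$. The factor $\varepsilon^{-1}$ is absorbed, and since $|y|\le 1$ on $\mathrm{supp}\,\eta$ this gives the pointwise control
\[
|A_\varepsilon(z)|\le C\int_0^1\!\!\int_{|y|\le 1}|\nabla f(z-s\varepsilon y)|\,|g(z-\varepsilon y)|\,|\partial\eta(y)|\,dy\,ds.
\]
Taking the $L^p(0,T;L^q(\mathbb{T}^d))$ norm, pulling the $\int_0^1\!\int dy$ outside by Minkowski's integral inequality, applying Hölder in space-time with $\tfrac1p=\tfrac1{p_1}+\tfrac1{p_2}$, $\tfrac1q=\tfrac1{q_1}+\tfrac1{q_2}$, and using translation invariance of the Lebesgue norms yields $\|A_\varepsilon\|_{L^p(L^q)}\le C\|\nabla f\|_{L^{p_1}(L^{q_1})}\|g\|_{L^{p_2}(L^{q_2})}$. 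For $B_\varepsilon$, Hölder together with Young's convolution inequality $\|g*\eta_\varepsilon\|_{L^{p_2}(L^{q_2})}\le\|g\|_{L^{p_2}(L^{q_2})}$ gives $\|B_\varepsilon\|_{L^p(L^q)}\le\|\partial f\|_{L^{p_1}(L^{q_1})}\|g\|_{L^{p_2}(L^{q_2})}$. Bounding $\|\nabla f\|_{L^{p_1}(L^{q_1})}$ and $\|\partial f\|_{L^{p_1}(L^{q_1})}$ by $\|\partial_t f\|_{L^{p_1}(L^{q_1})}+\|\nabla f\|_{L^{p_1}(L^{q_1})}$ then produces the stated inequality, with $C$ depending only on $\eta$.

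For the convergence I would first reduce to smooth $g$: since the estimate just proved is uniform in $\varepsilon$ and linear in $g$, choosing $g_n\in C^\infty$ with $g_n\to g$ in $L^{p_2}(L^{q_2})$ — legitimate precisely because $p_2,q_2<\infty$ — controls the commutator of the remainder $g-g_n$ uniformly in $\varepsilon$, leaving the claim for smooth $g$. For smooth $g$ I again use $\int\partial\eta_\varepsilon=0$ to split $A_\varepsilon=g\,(\partial f)^\varepsilon+\int[f(z-w)-f(z)][g(z-w)-g(z)]\,\partial\eta_\varepsilon(w)\,dw$; the last integral carries an extra factor $|g(z-w)-g(z)|\le C\varepsilon$ coming from the smoothness of $g$ and so tends to $0$ in $L^p(L^q)$, so that $\partial(fg)^\varepsilon-\partial(fg^\varepsilon)\to g\,(\partial f)^\varepsilon-(\partial f)(g*\eta_\varepsilon)$ in the limiting sense, and both terms converge to $g\,\partial f$. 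The main obstacle — and the only genuinely delicate point — is justifying $(\partial f)^\varepsilon\to\partial f$ in $L^p(L^q)$ when $p_1$ or $q_1$ may be infinite, so that $\partial f$ need not be approximable by smooth functions in $L^{p_1}(L^{q_1})$ and translations of $\nabla f$ need not be continuous there. This is resolved by observing that the \emph{target} exponents are automatically finite: $\tfrac1p=\tfrac1{p_1}+\tfrac1{p_2}\ge\tfrac1{p_2}>0$ forces $p<\infty$, and likewise $q<\infty$, so mollification does converge in $L^p(L^q)$; combined with the prior density step on $g$, this yields $\partial(fg)^\varepsilon-\partial(fg^\varepsilon)\to0$ in $L^p(0,T;L^q(\mathbb{T}^d))$ under the single hypothesis $p_2,q_2<\infty$.
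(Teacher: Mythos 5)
The paper itself contains no proof of this lemma --- it is imported verbatim with the citation \cite{[YWW],[LV],[CY]} --- and your argument is essentially the standard DiPerna--Lions/Constantin--E--Titi commutator proof given in those references: the same splitting of the commutator into $A_\varepsilon-B_\varepsilon$, the same rescaled first-order Taylor expansion in the full space-time variable (which is exactly why both $\partial_t f$ and $\nabla f$ appear in the bound), and the same density-in-$g$ reduction for the convergence, with your observation that $p\le p_2<\infty$ and $q\le q_2<\infty$ force the target exponents to be finite being precisely the point that makes the hypothesis $p_2,q_2<\infty$ sufficient. Two cosmetic remarks: the identity defining $A_\varepsilon$ is pure algebra and does not use $\int\partial\eta_\varepsilon\,dw=0$ (that cancellation is only needed in your later splitting $A_\varepsilon=g\,(\partial f)^\varepsilon+R_\varepsilon$), and, like the cited sources, you gloss over the fact that temporal mollification on $(0,T)$ is only defined on compact subintervals --- harmless here, since the paper applies the lemma against a test function $\phi$ compactly supported in $(0,+\infty)$.
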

\begin{lemma}[\cite{[Simon]}]\label{AL}
	Let $X\hookrightarrow B\hookrightarrow Y$ be three Banach spaces with compact imbedding $X \hookrightarrow\hookrightarrow Y$. Further, let there exist $0<\theta <1$ and $M>0$ such that
	\begin{equation}\label{le1}
	\|v\|_{B}\leq M\|v\|_{X}^{1-\theta}\|v\|_{Y}^\theta\ \ for\ all\ v\in X\cap Y.\end{equation}
Denote for $T>0$,
\begin{equation}\label{le2}
	W(0,T):=W^{s_0,r_0}((0,T), X)\cap W^{s_1,r_1}((0,T),Y)
\end{equation}
with
\begin{equation}\label{le3}
	\begin{aligned}
		&s_0,s_1 \in \mathbb{R}; \ r_0, r_1\in [1,\infty],\\
		s_\theta :=(1-\theta)s_0&+\theta s_1,\ \f{1}{r_\theta}:=\f{1-\theta}{r_0}+\f{\theta}{r_1},\ s^{*}:=s_\theta -\f{1}{r_\theta}.
	\end{aligned}
\end{equation}
Assume that $s_\theta>0$ and $F$ is a bounded set in $W(0,T)$. Then, we have

If $s_{*}\leq 0$, then $F$ is relatively compact in $L^p((0,T),B)$ for all $1\leq p< p^{*}:=-\f{1}{s^{*}}$.

If $s_{*}> 0$, then $F$ is relatively compact in $C((0,T),B)$.

\end{lemma}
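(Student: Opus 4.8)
The plan is to treat this as the Aubin--Lions--Simon compactness theorem in its fractional-in-time form, and to prove it by separating the \emph{spatial} mechanism (the interpolation inequality, which makes $B$ an intermediate space between $X$ and $Y$) from the \emph{temporal} mechanism (the fractional Sobolev scale $W^{s,r}$ in the time variable). The target is to show that a set $F$ bounded in $W(0,T)$ is in fact bounded in the single interpolated space $W^{s_\theta,r_\theta}((0,T),B)$, after which the dichotomy $s^*\le 0$ versus $s^*>0$ becomes exactly the Sobolev-embedding dichotomy for one-dimensional fractional Sobolev functions valued in a Banach space; the compactness, as opposed to mere boundedness, is then extracted from the compact embedding $X\hookrightarrow\hookrightarrow Y$.

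First I would record that the hypothesis $\|v\|_{B}\le M\|v\|_{X}^{1-\theta}\|v\|_{Y}^{\theta}$ says precisely that $B$ is of class $\mathcal J(\theta)$ between $X$ and $Y$, so by the Lions--Peetre theory the real interpolation space embeds, $(X,Y)_{\theta,1}\hookrightarrow B$. Applying this inequality pointwise in $t$ and interpolating the two temporal norms (with exponents $(s_0,r_0)$ and $(s_1,r_1)$, weighted by $1-\theta$ and $\theta$) would yield an estimate of the form $\|v\|_{W^{s_\theta,r_\theta}((0,T),B)}\le C\|v\|_{W^{s_0,r_0}((0,T),X)}^{1-\theta}\|v\|_{W^{s_1,r_1}((0,T),Y)}^{\theta}$, so that $F$ is bounded in $W^{s_\theta,r_\theta}((0,T),B)$; here the assumption $s_\theta>0$ guarantees that this target space carries genuine (possibly fractional) time regularity.

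Next I would invoke the one-dimensional vector-valued Sobolev embedding on $(0,T)$. When $s^*=s_\theta-1/r_\theta\le 0$ one has $W^{s_\theta,r_\theta}((0,T),B)\hookrightarrow L^{p^*}((0,T),B)$ with $p^*=-1/s^*$, giving boundedness in $L^p$ for every $p<p^*$; when $s^*>0$ one gets a H\"older embedding into $C((0,T),B)$. To turn these continuous embeddings into compactness I would use the difference-quotient characterization of fractional Sobolev spaces: boundedness in $W^{s_1,r_1}((0,T),Y)$ (respectively $W^{s_0,r_0}((0,T),X)$) controls the time translations $\|v(\cdot+h)-v\|_{L^{r_1}(Y)}$ uniformly over $F$, and interpolating these with the $\mathcal J(\theta)$ inequality gives uniform equicontinuity in time of the $B$-valued functions. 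Combining this temporal equicontinuity with the pointwise-in-time spatial precompactness furnished by $X\hookrightarrow\hookrightarrow Y$ (via the interpolation inequality once more), I would conclude by the Kolmogorov--Riesz--Fr\'echet criterion in the case $s^*\le 0$ and by the Arzel\`a--Ascoli theorem in the case $s^*>0$.

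I expect the main obstacle to be the rigorous interpolation of the vector-valued fractional Sobolev norms in the second step, since $s_0,s_1$ may be non-integer or even negative and the two norms live over different target spaces $X$ and $Y$; making the multiplicative estimate $\|v\|_{W^{s_\theta,r_\theta}(B)}\lesssim \|v\|_{W^{s_0,r_0}(X)}^{1-\theta}\|v\|_{W^{s_1,r_1}(Y)}^{\theta}$ precise, rather than merely the boundedness statement, requires care with the real-interpolation functor applied to Banach-valued Sobolev--Slobodeckij spaces. A secondary delicate point is verifying that the equicontinuity-plus-spatial-compactness package transfers cleanly to the endpoint space $B$ in both branches of the dichotomy, which is exactly the content one borrows from \cite{[Simon]}.
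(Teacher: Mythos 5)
The paper offers no proof of Lemma \ref{AL} to compare against: the statement is quoted verbatim, with attribution, from Simon \cite{[Simon]}, and is used in Section 4 as a black box. The relevant benchmark is therefore Simon's own proof, and your outline does reproduce its broad architecture: interpolate the two temporal norms into $W^{s_\theta,r_\theta}((0,T),B)$ using the $\mathcal{J}(\theta)$ inequality, invoke the one-dimensional vector-valued Sobolev/H\"older embedding to produce the dichotomy $s^*\leq 0$ versus $s^*>0$, and extract compactness (rather than mere boundedness) from translation equicontinuity combined with the spatial precompactness coming from $X\hookrightarrow\hookrightarrow Y$, via a Kolmogorov--Riesz type criterion in the $L^p$ branch and Arzel\`a--Ascoli in the $C$ branch. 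In particular your observation that $X\hookrightarrow\hookrightarrow Y$ together with the interpolation inequality upgrades to $X\hookrightarrow\hookrightarrow B$ is the correct Lions-type ingredient.

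However, two steps would fail as you have literally described them. First, ``applying this inequality pointwise in $t$'' cannot yield the multiplicative bound $\|v\|_{W^{s_\theta,r_\theta}(B)}\lesssim\|v\|_{W^{s_0,r_0}(X)}^{1-\theta}\|v\|_{W^{s_1,r_1}(Y)}^{\theta}$ once $s_0,s_1$ are fractional, because the Sobolev--Slobodeckij seminorms are nonlocal in $t$; and the lemma allows any $s_0,s_1\in\mathbb{R}$, so one of the orders may be negative, in which case elements of $W^{s_0,r_0}((0,T),X)$ are distributions in $t$ and pointwise evaluation does not even make sense. The working mechanism (Simon's) is to apply the $\mathcal{J}(\theta)$ inequality not to $v(t)$ but to translation increments, estimating $\|v(\cdot+h)-v\|_{L^{r_\theta}(B)}$ through the corresponding $X$- and $Y$-valued increments --- i.e., via Nikolskii-type characterizations of the fractional scale --- and to handle negative orders by a separate reduction to nonnegative ones (integrating in time). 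Second, and for the same reason, your ``pointwise-in-time spatial precompactness'' input to the Kolmogorov criterion requires compactness in $B$ of the averaged sets $\left\{\int_{t_1}^{t_2}f\,dt:\ f\in F\right\}$; when $s_0<0$ the family $F$ need not be bounded in any $L^1_{loc}((0,T),X)$ sense, so this step too only goes through after that reduction. You candidly flag the fractional interpolation as the main obstacle and defer it to \cite{[Simon]}, which is honest, but since that estimate and the negative-order reduction constitute essentially all of the mathematical substance of the lemma, what you have is an accurate roadmap of the known proof rather than a proof.
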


\section{Energy equality in compressible Navier-Stokes equations without vacuum}

When we consider the compressible Navier-Stokes equations with the density containing non-vacuum, due to the moumentum eqaution $\eqref{INS}_2$, just spatially regularized velocity $v^\varepsilon$ can be used as a test function to generate the global energy equality. However, to avoid a commutator estimate involing time $t$, we choose $\frac{(\varrho v)^\varepsilon}{\varrho ^\varepsilon}$ instead of $v^\varepsilon$ as the test function, which was introduced in \cite{[NNT],[LS]}. Hence, in this section,  we let  $\eta$ be non-negative   smooth function only supported in the space ball of radius 1 and its integral equals to 1. We define the rescaled space mollifier $\eta_{\varepsilon}(x)=\frac{1}{\varepsilon^{d}}\eta(\frac{x}{\varepsilon})$ and
$$
f^{\varepsilon}(x)=\int_{\Omega}f(y)\eta_{\varepsilon}(x-y)dyds.
$$

 \begin{proof}[Proof of Theorem \ref{the1.2}]
 	Multiplying $\eqref{INS}_2$ by $\left(\frac{(\varrho v)^\varepsilon}{\varrho^\varepsilon}\right)^\varepsilon$, then integrating over $(s,t]\times \mathbb{T}^d$ with $0<s<t<T$, we have
\begin{equation}\label{c1} \begin{aligned}
 \int_s^t\int\f{(\varrho v)^{\varepsilon}}{\varrho^{\varepsilon}}\B[\partial_{\tau}(\varrho v)^{\varepsilon}+ \Div(\varrho v\otimes v)^{\varepsilon}+\nabla  P(\varrho)^\varepsilon-\mu \Delta v^\varepsilon-(\mu+\lambda) \nabla(\Div v)^\varepsilon\B]=0.
 \end{aligned}\end{equation}
We will rewrite every term of the last  equality  to pass the limit of $\varepsilon$. For the first term, a  straightforward  calculation and $\eqref{INS}_{1}$ yields that
\begin{equation}\label{c2}
\begin{aligned} \int_s^t\int\f{(\varrho v)^{\varepsilon}}{\varrho^{\varepsilon}} \partial_{\tau}\B(\varrho v\B)^{\varepsilon}&= \int_s^t\int\f12\partial_{\tau}(\f{|(\varrho v)^{\varepsilon}|^{2}}{\varrho^{\varepsilon}})+\f12\partial_{\tau}\varrho^{\varepsilon}
\f{|(\varrho v)^{\varepsilon}|^{2}}{(\varrho^{\varepsilon})^{2}}\\
&= \int_s^t\int\f12\partial_{\tau}\B(\f{|(\varrho v)^{\varepsilon}|^{2}}{\varrho^{\varepsilon}}\B)- \f12\Div(\varrho v)^{\varepsilon}
\f{|(\varrho v)^{\varepsilon}|^{2}}{(\varrho^{\varepsilon})^{2}}.\\
 \end{aligned}\end{equation}
Integration by parts means that
\begin{equation}\label{c3}\begin{aligned}
  &\int_s^t\int\f{(\varrho v)^{\varepsilon}}{\varrho^{\varepsilon}}  \Div(\varrho v\otimes v)^{\varepsilon}\\
  =&-\int_s^t\int\nabla\B(\f{(\varrho v)^{\varepsilon}}{\varrho^{\varepsilon}} \B) [(\varrho v\otimes v)^{\varepsilon}-(\varrho v)^{\varepsilon}\otimes v^{\varepsilon}]-\int_s^t\int\nabla\B(\f{(\varrho v)^{\varepsilon}}{\varrho^{\varepsilon}} \B)(\varrho v)^{\varepsilon}\otimes v^{\varepsilon}.
 \end{aligned}\end{equation}
 Making use of integration by parts once again, we infer that
 \begin{equation}\label{c4}\begin{aligned}
&- \int_s^t\int\nabla\B(\f{(\varrho v)^{\varepsilon}}{\varrho^{\varepsilon}} \B)(\varrho v)^{\varepsilon}\otimes v^{\varepsilon}\\=& \int_s^t\int \Div v^{\varepsilon} \f{|(\varrho v)^\varepsilon|^{2}}{\varrho^{\varepsilon}}+\f12 \f{ v^{\varepsilon}}{\varrho^{\varepsilon}}\nabla|(\varrho v)^{\varepsilon}  |^{2}\\
=& \int_s^t\int \f12 \Div v^{\varepsilon} \f{|(\varrho v)^\varepsilon|^{2}}{\varrho^{\varepsilon}}-\f12 v^{\varepsilon}\nabla({\f{1}{\varrho^{\varepsilon}}} )|(\varrho v)^{\varepsilon}  |^{2}\\
=& \f12\int_s^t\int \Div( \varrho^{\varepsilon}v^{\varepsilon} ) \f{|(\varrho v)^\varepsilon|^{2}}{(\varrho^{\varepsilon})^{2}} \\
 =& \f12\int_s^t\int \Div\B[ \varrho^{\varepsilon}v^{\varepsilon}-(\varrho v)^{\varepsilon} \B] \f{|(\varrho v)^\varepsilon|^{2}}{(\varrho^{\varepsilon})^{2}} +
 \f12\int_s^t\int \Div (\varrho v)^{\varepsilon}   \f{|( \varrho v)^\varepsilon|^{2}}{(\varrho^{\varepsilon})^{2}}\\
 =&- \int_s^t\int\B[ \varrho^{\varepsilon}v^{\varepsilon}-(\varrho v)^{\varepsilon} \B] \f{(\varrho v)^\varepsilon}
 {\varrho^{\varepsilon}}\nabla\f{(\varrho v)^\varepsilon}{\varrho^{\varepsilon}} +
 \f12\int_s^t\int \Div (\varrho v)^{\varepsilon}   \f{|(\varrho v)^\varepsilon|^{2}}{(\varrho^{\varepsilon})^{2}}.
 \end{aligned}\end{equation}
Inserting  \eqref{c4}  into \eqref{c3}, we  arrive at
 \begin{equation}\label{c5}\begin{aligned}
  &\int_s^t\int\f{(\varrho v)^{\varepsilon}}{\varrho^{\varepsilon}}  \Div(\varrho v\otimes v)^{\varepsilon}\\=&-\int_s^t\int\nabla\B(\f{(\varrho v)^{\varepsilon}}{\varrho^{\varepsilon}} \B) [(\varrho v\otimes v)^{\varepsilon}-(\varrho v)^{\varepsilon}\otimes v^{\varepsilon}]\\&-\int_s^t\int\B[ \varrho^{\varepsilon}v^{\varepsilon}-(\varrho v)^{\varepsilon} \B] \f{(\varrho v)^\varepsilon}
  {\varrho^{\varepsilon}}\nabla \f{(\varrho v)^\varepsilon}{\varrho^{\varepsilon}}+
 \f12\int_s^t\int \Div (\varrho v)^{\varepsilon}   \f{|(\varrho v)^\varepsilon|^{2}}{(\varrho^{\varepsilon})^{2}}.
   \end{aligned}\end{equation}
   For the pressure term, by the integration by parts, one has
    \begin{equation}\label{c6}
    \begin{aligned}
 \int_s^t\int\f{(\varrho v)^{\varepsilon}}{\varrho^{\varepsilon}} \nabla(P(\varrho))^{\varepsilon}= &\int_s^t\int\frac{(\varrho v)^\varepsilon}{\varrho^\varepsilon}\nabla \left[(P(\varrho))^\varepsilon-P(\varrho^\varepsilon)\right]+\int_s^t\int  \f{(\varrho v)^{\varepsilon}}{\varrho^{\varepsilon}} \nabla  P(\varrho^{\varepsilon})\\
 =&-\int_s^t\int\Div\B[\f{(\varrho v)^{\varepsilon}}{\varrho^{\varepsilon}} \B] [(P(\varrho))^{\varepsilon}- P(\varrho^{\varepsilon}) ]+\int_s^t\int  \f{(\varrho v)^{\varepsilon}}{\varrho^{\varepsilon}} \nabla  P(\varrho^{\varepsilon}).
 \end{aligned} \end{equation}
Using the mass equation $\eqref{INS}_1$, the second term on the right hand-side of \eqref{c6} can be rewritten as
\begin{equation}\label{c7}
\begin{aligned}
\int_s^t\int \frac{(\varrho v)^\varepsilon}{\varrho ^\varepsilon}\nabla P(\varrho^\varepsilon)&=\int_s^t\int (\varrho v)^\varepsilon \gamma (\varrho^\varepsilon)^{\gamma-2}\nabla \varrho^\varepsilon=\int_s^t\int (\varrho v)^\varepsilon\frac{\gamma}{\gamma -1}\nabla (\varrho^\varepsilon)^{\gamma-1}\\
&=\int_s^t\int \partial_\tau\varrho^\varepsilon \frac{\gamma}{\gamma-1}(\varrho^\varepsilon)^{\gamma-1}dxd\tau=\int_s^t\int \frac{1}{\gamma-1}\partial_\tau P(\varrho^\varepsilon).
\end{aligned}
\end{equation}

   It is clear that
 \begin{equation}\label{c8}\begin{aligned} &-\mu\int_s^t\int\f{(\varrho v)^{\varepsilon}}{\varrho^{\varepsilon}}  \Delta v^{\varepsilon}= \mu\int_s^t\int-\Delta v ^\varepsilon v^\varepsilon - \Delta v^\varepsilon
 \f{(\varrho v)^{\varepsilon}-\varrho^{\varepsilon} v^{\varepsilon}}{\varrho^{\varepsilon}},
 \\
&-(\mu+\lambda)\int_s^t\int\f{(\varrho v)^{\varepsilon}}{\varrho^{\varepsilon}}\nabla(\Div v)^\varepsilon=(\mu+\lambda) \int_s^t\int -\nabla  (\Div v)^\varepsilon  v^{\varepsilon}-\nabla(\Div v)^{\varepsilon}
 \f{(\varrho v)^{\varepsilon}-\varrho^{\varepsilon} v^{\varepsilon}}{\varrho^{\varepsilon}}.
\end{aligned}  \end{equation}
Substituting \eqref{c2}, \eqref{c5}-\eqref{c8} into \eqref{c1}, we see that
 \begin{equation}\label{c9}
 \begin{aligned}
&\int_s^t\int\partial_{\tau}\left(\f12\f{|(\varrho v)^{\varepsilon}|^{2}}{\varrho^{\varepsilon}}+\frac{1}{\gamma-1}P(\varrho^\varepsilon)\right) +\mu \int_s^t\int |\nabla v^{\varepsilon} |^2 +(\mu+\lambda)\int_s^t\int |\nabla \Div v^{\varepsilon} |^2\\
=
 &\int_s^t\int \mu \Delta v^{\varepsilon}
 \f{(\varrho v)^{\varepsilon}-\varrho^{\varepsilon} v^{\varepsilon}}{\varrho^{\varepsilon}}+(\mu+\lambda)\int_s^t\int\nabla(\Div v)^{\varepsilon}
 \f{(\varrho v)^{\varepsilon}-\varrho^{\varepsilon} v^{\varepsilon}}{\varrho^{\varepsilon}}
\\&\ \ \ +\int_s^t\int\Div\B[\f{(\varrho v)^{\varepsilon}}{\varrho^{\varepsilon}} \B] [(P(\varrho))^{\varepsilon}- P(\varrho^{\varepsilon}) ]\\
&+\int_s^t\int\nabla\B(\f{(\varrho v)^{\varepsilon}}{\varrho^{\varepsilon}} \B) [(\varrho v\otimes v)^{\varepsilon}-(\varrho v)^{\varepsilon}\otimes v^{\varepsilon}] +\int_s^t\int\B[ \varrho^{\varepsilon}v^{\varepsilon}-(\varrho v)^{\varepsilon} \B] \f{(\varrho v)^\varepsilon}
{\varrho^{\varepsilon}}\nabla\f{(\varrho v)^\varepsilon}{\varrho^{\varepsilon}}.
  \end{aligned}\end{equation}
  Next, we need to prove that the terms on the right hand-side of \eqref{c9} tend to zero as $\varepsilon\rightarrow 0$.

Under the hypothesis
\begin{equation}\label{unified}
	\begin{aligned}
&0<  c\leq\varrho \in L^{ k}(0,T;L^{ l}(\mathbb{T}^d)), \\
& v\in L^{p }(0,T;L^{q} (\mathbb{T}^{d})),\nabla v\in L^{\f{kp}{k(p-2)-4p}}(0,T; L^{\f{lq}{l(q-2)-4q}}(\mathbb{T}^{d})),
\end{aligned}
\end{equation}
with $k> \max\{\frac{4p}{p-2},\f{(\gamma-2)p }{2}\}$, $l> \max\{\frac{4q}{q-2}, \f{(\gamma-2)q}{2}\}$ and $k(4-p)+7p\geq 0,\  l(4-q)+7q\geq 0$.

It follows from  Lemma \ref{lem2.1}  that
\begin{equation}\label{ec10}\begin{aligned}
		  \|\Delta v^\varepsilon\|_{L^{\f{kp}{k(p-2)-4p}}(L^{\f{lq}{l(q-2)-4q}})}
\leq& C\|\Div (\nabla v)^\varepsilon\|_{L^{\f{kp}{k(p-2)-4p}}(L^{\f{lq}{l(q-2)-4q}})}\\
\leq& C\varepsilon^{-1}\| \nabla v \|_{L^{\f{kp}{k(p-2)-4p}}(L^{\f{lq}{l(q-2)-4q}})}, \end{aligned}\end{equation}
and
	\be\ba	 \limsup_{\varepsilon\rightarrow0}\varepsilon\|\Delta v^\varepsilon\|&_{L^{\f{kp}{k(p-2)-4p}}(L^{\f{lq}{l(q-2)-4q}})}=0.
\end{aligned}\end{equation}
Using the  Constantin-E-Titi type   commutators on mollifying kernel  Lemma  \ref{lem2.2},
we know that
 \begin{equation}\label{ec11}\begin{aligned}
			\|(\varrho v)^\varepsilon-\varrho^\varepsilon v^\varepsilon\|_{L^{\frac{kp}{2k+4p}}(L^{\frac{lq}{2l+4q}})}\leq C\varepsilon \|\varrho \|_{L^{\frac{kp}{k(4-p)+8p}}(L^{\frac{lq}{l(4-q)+8q}} )} \|\nabla v\|_{L^{\f{kp}{k(p-2)-4p}}(L^{\f{lq}{l(q-2)-4q}})}.
\end{aligned}\end{equation}
Combining the the H\"older's inequality and  \eqref{ec10}-\eqref{ec11},
we arrive at
\begin{equation}\label{c11}\begin{aligned}
&\B|\int_s^t\int \mu \Delta v^\varepsilon
 \f{(\varrho v)^{\varepsilon}-\varrho^{\varepsilon} v^{\varepsilon}}{\varrho^{\varepsilon}}\B|\\
\leq& C\|\Delta v^\varepsilon\|_{L^{\f{kp}{k(p-2)-4p}}(L^{\f{lq}{l(q-2)-4q}})}\B\|
 \f{(\varrho v)^{\varepsilon}-\varrho^{\varepsilon} v^{\varepsilon}}{\varrho^{\varepsilon}}\B\|_{L^{\frac{kp}{2k+4p}}(L^{\frac{lq}{2l+4q}})}\\
\leq& C\varepsilon\|\Delta v^\varepsilon\|_{L^{\f{kp}{k(p-2)-4p}}(L^{\f{lq}{l(q-2)-4q}})}\|\varrho \|_{L^{\frac{kp}{k(4-p)+8p}}(L^{\frac{lq}{l(4-q)+8q}}) } \|\nabla v\|_{L^{\f{kp}{k(p-2)-4p}}(L^{\f{lq}{l(q-2)-4q}})},
 \end{aligned}\end{equation}
where we need $ k >\frac{4p}{p-2},\ k(4-p)+7p\geq 0$ and $l >\frac{4q}{q-2},\ l(4-q)+7q\geq 0$.

 As a consequence, we get
$$ \limsup_{\varepsilon\rightarrow0}\B|\mu\int_s^t\int\Delta v^\varepsilon
 \f{(\varrho v)^{\varepsilon}-\varrho^{\varepsilon} v^{\varepsilon}}{\varrho^{\varepsilon}}\B|=0.
$$
Likewise, there holds
\begin{equation}\label{c12}
\begin{aligned}
\limsup_{\varepsilon\rightarrow0}\B|(\mu+\lambda )\int_s^t\int\nabla(\Div v)^{\varepsilon}
 \f{(\varrho v)^{\varepsilon}-\varrho^{\varepsilon} v^{\varepsilon}}{\varrho^{\varepsilon}}\B|=0.
\end{aligned}\end{equation}
Applying Lemma  \ref{lem2.3}, we get
\be\label{ec3.16}
\|\Div\B[\f{(\varrho v)^{\varepsilon}}{\varrho^{\varepsilon}} \B]\|_{L^{\f{kp}{k(p-2)-2p}}(L^{\f{lq}{l(q-2)-2q}})}\leq C\|\nabla v\|_{L^{\f{kp}{k(p-2)-4p}}(L^{\f{lq}{l(q-2)-4q}})} \left(\|\varrho\|_{L^{\frac{k}{2}}(L^{\frac{l}{2}})}+\|\varrho \|_{L^k(L^{l})}^2\right)
\ee
For some $\theta\in [0,1]$, the mean value theorem, the H\"older's inequality and the triangle inequality  ensure that
\be\ba\label{ec3.17}
\|  \varrho^{\gamma}-( \varrho^{\varepsilon} )^{\gamma} \|_{L^\frac{ql}{2(l+q) }}
&=\|[\varrho+\theta(\varrho-\varrho^{\varepsilon}) ]^{\gamma-1} (\varrho -\varrho^{\varepsilon}) \|_{L^\frac{ql}{2(l+q) }}\\
&\leq C \| \varrho \|_{L^\frac{\gamma ql}{2(l+q) }}^{\gamma-1} \| \varrho -\varrho^{\varepsilon}  \|_{L^\frac{\gamma ql}{2(l+q) }},
\ea\ee
which follows from that, by Lemma \ref{lem2.11}, if $\varrho\in L^\frac{\gamma ql}{2(l+q) }$, as $\varepsilon\rightarrow 0$,
\be\ba\label{ec3.18}\|  \varrho^{\gamma}-( \varrho^{\gamma} )^{\varepsilon} \|_{L^\frac{ql}{2(l+q) }}\rightarrow 0.\ea\ee
With the help  of  the triangle inequality, the H\"older's inequality and \eqref{ec3.16}-\eqref{ec3.18}, we obtain
\begin{equation}\label{c13}\begin{aligned}  &\int_s^t\int\Div\B[\f{(\varrho v)^{\varepsilon}}{\varrho^{\varepsilon}} \B] [( \varrho^{\gamma} )^{\varepsilon}- (\varrho^{\varepsilon})^{\gamma} ]\\
  \leq& \int_s^t\int\Div\B[\f{(\varrho v)^{\varepsilon}}{\varrho^{\varepsilon}} \B] |( \varrho^{\gamma} )^{\varepsilon}- \varrho^{\gamma} |+ \int_s^t\int\Div\B[\f{(\varrho v)^{\varepsilon}}{\varrho^{\varepsilon}} \B] | \varrho^{\gamma} - (\varrho^{\varepsilon})^{\gamma}|\\
  \leq& C \|\Div\B[\f{(\varrho v)^{\varepsilon}}{\varrho^{\varepsilon}} \B]\|_{L^{\f{kp}{k(p-2)-2p}}(L^{\f{lq}{l(q-2)-2q}})}\B(\|( \varrho^{\gamma} )^{\varepsilon}- \varrho^{\gamma}\|_{L^\frac{pk}{2(k+p) }(L^\frac{ql}{2(l+q) })}+\|  \varrho^{\gamma}-( \varrho^{\gamma} )^{\varepsilon} \|_{L^\frac{pk}{2(k+p) }(L^\frac{ql}{2(q+l) })}\B)
  \\
  \leq&   C\|\nabla v\|_{L^{\f{kp}{k(p-2)-4p}}(L^{\f{lq}{l(q-2)-4q}})} \left(\|\varrho\|_{L^{\frac{l}{2}}(L^{\frac{k}{2}})}+\|\varrho \|_{L^l(L^{k})}^2\right)\\
  &\ \ \ \ \ \ \
\B(\|( \varrho^{\gamma} )^{\varepsilon}- \varrho^{\gamma}\|_{L^\frac{pk}{2(k+p) }(L^\frac{ql}{2(l+q) })}+\|  \varrho^{\gamma}-( \varrho^{\gamma} )^{\varepsilon} \|_{L^\frac{pk}{2(k+p) }(L^\frac{ql}{2(q+l) })}\B)
  \end{aligned}\end{equation}
  which means that
  $$
  \limsup_{\varepsilon\rightarrow0}\int_s^t\int\Div\left(\f{(\varrho v)^{\varepsilon}}{\varrho^{\varepsilon}} \right) \left((P(\varrho))^{\varepsilon}- P(\varrho^{\varepsilon}) \right)=0.$$
At this stage,  it is enough to show
\begin{equation}\label{c14}\begin{aligned}
\limsup_{\varepsilon\rightarrow0}\int_s^t\int\nabla\B(\f{(\varrho v)^{\varepsilon}}{\varrho^{\varepsilon}} \B) [(\varrho v\otimes v)^{\varepsilon}-(\varrho v)^{\varepsilon}\otimes v^{\varepsilon}] +\int_s^t\int\B[ \varrho^{\varepsilon}v^{\varepsilon}-(\varrho v)^{\varepsilon} \B] \f{(\varrho v)^\varepsilon}
 {\varrho^{\varepsilon}}\nabla\f{(\varrho v)^\varepsilon}{\varrho^{\varepsilon}} =0,
  \end{aligned}\end{equation}
In view of   \eqref{2.3} and  Lemma  \ref{lem2.1},
\be\label{c3.22}
\B\|\nabla\B(\f{(\varrho v)^{\varepsilon}}{\varrho^{\varepsilon}} \B)\B\|_{L^{\f{kp}{k+2p}}(L^{\f{lq}{l+2q}})}\leq C\varepsilon^{-1}\| v\|_{L^{p}(L^{q})}\|\varrho \|_{L^k (L^{l})}^2,
\ee
and
\be\label{c3.23}
\limsup_{\varepsilon\rightarrow0}\varepsilon\B\|\nabla\B(\f{(\varrho v)^{\varepsilon}}{\varrho^{\varepsilon}} \B)\B\|_{L^{\f{kp}{k+2p}}(L^{\f{lk}{l+2q}})}=0.
\ee
Taking advantage of  \eqref{fg'}     in Lemma \ref{lem2.2},
\be\label{c3.24}
\|(\varrho v\otimes v)^{\varepsilon}-(\varrho v)^{\varepsilon}\otimes v^{\varepsilon} \|_{L^{\f{kp}{k(p-1)-3p}}(L^{\f{lq}{l(q-1)-3q}})}
 \leq C\varepsilon\|\nabla v\|_{L^{\f{kp}{k(p-2)-4p}}(L^{\f{lq}{l(q-2)-4q}})}  \| v\|_{L^{p}(L^{q})}\|\varrho\|_{L^{k}(L^{l})}
\ee
Thanks to the H\"older's inequality and    \eqref{c3.22}-\eqref{c3.24}, we find
  \begin{equation}\label{c18}\begin{aligned}
 &\B|\int_s^t\int\nabla\B(\f{(\varrho v)^{\varepsilon}}{\varrho^{\varepsilon}} \B) [(\varrho v\otimes v)^{\varepsilon}-(\varrho v)^{\varepsilon}\otimes v^{\varepsilon}]\B|\\
\leq & C\B\|\nabla\B(\f{(\varrho v)^{\varepsilon}}{\varrho^{\varepsilon}} \B)\B\|_{L^{\f{kp}{k+2p}}(L^{\f{lk}{l+2q}})}\|(\varrho v\otimes v)^{\varepsilon}-(\varrho v)^{\varepsilon}\otimes v^{\varepsilon} \|_{L^{\f{kp}{k(p-1)-3p}}(L^{\f{lq}{l(q-1)-3q}})}\|1\|_{L^{k}(L^{l})}\\
\leq & C\varepsilon\B\|\nabla\B(\f{(\varrho v)^{\varepsilon}}{\varrho^{\varepsilon}} \B)\B\|_{L^{\f{kp}{k+2p}}(L^{\f{lk}{l+2q}})}\|\nabla v\|_{L^{\f{kp}{k(p-2)-4p}}(L^{\f{lq}{l(q-2)-4q}})}  \| v\|_{L^{p}(L^{q})}\|\varrho\|_{L^{k}(L^{l})}.
 \end{aligned}\end{equation}
 which in turn implies
$$
\limsup_{\varepsilon\rightarrow0}\B|\int_s^t\int\nabla\B(\f{(\varrho v)^{\varepsilon}}{\varrho^{\varepsilon}} \B) [(\varrho v\otimes v)^{\varepsilon}-(\varrho v)^{\varepsilon}\otimes v^{\varepsilon}]\B|=0.$$
We turn our attentions to the term $\int_s^t\int\B[ \varrho^{\varepsilon}v^{\varepsilon}-(\varrho v)^{\varepsilon} \B] \f{(\varrho v)^\varepsilon}
{\varrho^{\varepsilon}}\nabla\f{(\varrho v)^\varepsilon}{\varrho^{\varepsilon}}$.
We conclude from Lemma \ref{lem2.2} that
\begin{equation}\label{1.3.16}
\|\varrho^{\varepsilon}v^{\varepsilon}-(\varrho v)^{\varepsilon}\|_{L^{\f{kp}{k(p-2)-3p}}(L^{\f{lq}{l(q-2)-3q}})} \leq  C \varepsilon\|\nabla v\|_{L^{\f{kp}{k(p-2)-4p}}(L^{\f{lq}{l(q-2)-4q}})}  \|\varrho\|_{L^{k} (L^{l} )}.
\end{equation}
Using the H\"older's inequality  and \eqref{1.3.16}  we find,
\begin{equation}\label{1.3.17} \begin{aligned}
&\B|\int_s^t\int\B[ \varrho^{\varepsilon}v^{\varepsilon}-(\varrho v)^{\varepsilon} \B] \f{(\varrho v)^\varepsilon}
{\varrho^{\varepsilon}}\nabla \f{(\varrho v)^\varepsilon}{\varrho^{\varepsilon}}\B| \\
\leq&C  \|\varrho^{\varepsilon}v^{\varepsilon}-(\varrho v)^{\varepsilon}\|_{L^{\f{kp}{k(p-2)-3p}}(L^{\f{lq}{l(q-2)-3q}})}\B\|\f{(\varrho v)^\varepsilon}
{\varrho^{\varepsilon}}\B\|_{L^{\frac{kp}{k+p}} (L^{\frac{lq}{l+q}}) } \B\|\nabla\f{(\varrho v)^\varepsilon}{\varrho^{\varepsilon}}\B\|_{L^{\f{kp}{k+2p}}(L^{\f{lk}{l+2q}})}\\
\leq& C \varepsilon\|\nabla v\|_{L^{\f{kp}{k(p-2)-4p}}(L^{\f{lq}{l(q-2)-4q}})}  \|\varrho\|_{L^{k}( L^{l}) }^2 \|v\|_{L^{p}L^{q}}  \B\|\nabla\f{(\varrho v)^\varepsilon}{\varrho^{\varepsilon}}\B\|_{L^{\f{kp}{k+2p}}(L^{\f{lk}{l+2q}})}\\
\end{aligned}\end{equation}
Together this with \eqref{c3.22} and \eqref{c3.23} yield  that
$$
\limsup_{\varepsilon\rightarrow0}
\B|\int_s^t\int\B[ \varrho^{\varepsilon}v^{\varepsilon}-(\varrho v)^{\varepsilon} \B] \f{(\varrho v)^\varepsilon}
{\varrho^{\varepsilon}}\nabla \f{(\varrho v)^\varepsilon}{\varrho^{\varepsilon}}\B|=0.
$$
Collecting  all the above estimates, using the weak continuity of $\varrho$ and $\varrho v$, we   complete the proof of Theorem \ref{the1.2}.
\end{proof}
\section{Energy equality in compressible Navier-Stokes equations allowing vacuum}
Unlike the case with non-vacuum, when we consider the compressible Navier-Stokes equations with the density containing vacuum, only spatially regularized velocity $v^\varepsilon$ fails to possess enough temporal regularity to qualify for a test function. To over this difficulity, we need to mollify the velocity both in space and time. Hence, in this section,  we let  $\eta$ be non-negative   smooth function  supported in the space-time ball of radius 1 and its integral equals to 1. We define the rescaled space mollifier $\eta_{\varepsilon}(t,x)=\frac{1}{\varepsilon^{d+1}}\eta(\frac{t}{\varepsilon},\frac{x}{\varepsilon})$ and
$$
f^{\varepsilon}(t,x)=\int_0^T\int_{\mathbb{T}^d}f(\tau,y)\eta_{\varepsilon}(t-\tau,x-y)dyd\tau.
$$
 \begin{proof}[Proof of Theorem \ref{the1.2}]

Let $\phi(t)$ be a smooth function compactly supported in $(0,+\infty)$.
Multiplying $\eqref{INS}_2$ by $(\phi v^{\varepsilon})^\varepsilon$, then integrating over $(0,T)\times \mathbb{T}^d$ , we infer that
\begin{equation}\label{d1} \begin{aligned}
		\int_0^T\int \phi(t)v^{\varepsilon}\B[\partial_{t}(\varrho v)^{\varepsilon}+ \Div(\varrho v\otimes v)^{\varepsilon}+\nabla P(\varrho)^\varepsilon- \mu \Delta v^\varepsilon-(\mu+\lambda)\nabla( \Div v)^\varepsilon\B]=0.
\end{aligned}\end{equation}
 To pass the limit of $\varepsilon$, we
reformulate  every term of the last equation.
A  straightforward  computation leads to
\begin{equation}\label{d2}
	\begin{aligned}
		\int_0^T\int \phi(t)v^{\varepsilon} \partial_{t} (\varrho v )^{\varepsilon}=&\int_0^T\int \phi(t)v^{\varepsilon}\B[ \partial_{t} (\varrho v )^{\varepsilon}-\partial_{t}(\varrho v^{\varepsilon})\B]+ \int_0^T\int \phi(t)v^{\varepsilon} \partial_{t}(\varrho v^{\varepsilon}) \\
		=& \int_0^T\int \phi(t)v^{\varepsilon} \B[\partial_{t} (\varrho v )^{\varepsilon}-\partial_{t}(\varrho v^{\varepsilon})\B]+\int_0^T\int \phi(t)\varrho\partial_t{\frac{|v^{\varepsilon}|^2}{2}} \\
		&+\int_0^T\int \phi(t)\varrho_t|v^{\varepsilon}|^2.
\end{aligned}\end{equation}
It follows from integration by parts and the mass equation $\eqref{INS}_1$ that
\begin{align}
	&\int_0^T\int\phi(t)v^{\varepsilon} \Div(\varrho v\otimes v)^{\varepsilon}\nonumber\\
	=& \int_0^T\int\phi(t)  v^{\varepsilon}  \Div[(\varrho v\otimes v)^{\varepsilon}-(\varrho  v)\otimes v^{\varepsilon}]+\int_0^T\int\phi(t)v^{\varepsilon}\Div(\varrho  v\otimes v^{\varepsilon})\nonumber\\
	=& -\int_0^T\int\phi(t)  \nabla v^{\varepsilon}  [(\varrho v\otimes v)^{\varepsilon}-(\varrho  v)\otimes v^{\varepsilon}]+  \int_0^T\int \phi(t)\left(\Div (\varrho v ) |v^{\varepsilon}|^{2}+\f12 \varrho v \nabla|v^{\varepsilon}  |^{2}
	\right)\nonumber\\
	=& -\int_0^T\int\phi(t) \nabla  v^{\varepsilon}  [(\varrho v\otimes v)^{\varepsilon}-(\varrho  v)\otimes v^{\varepsilon}]+\f{1}{2}\int_0^T\int\phi(t) \Div (\varrho v ) |v^{\varepsilon}|^{2}\nonumber\\
	=& -\int_0^T\int\phi(t)  \nabla v^{\varepsilon}  [(\varrho v\otimes v)^{\varepsilon}-(\varrho  v)\otimes v^{\varepsilon}]-\frac{1}{2}\int_0^T\int \phi(t) \partial_t \varrho |v^{\varepsilon}|^{2}. \label{d3}
\end{align}
We rewrite the pressure term as
\begin{equation}\label{d6}
	\begin{aligned}
		&\int_0^T\int\phi(t)v^{\varepsilon}\nabla (\varrho^\gamma)^{\varepsilon}= \int_0^T\int\phi(t) [v^{\varepsilon}\nabla(\varrho^\gamma)^{\varepsilon}-v\nabla (\varrho^\gamma)]+\int_0^T\int\phi(t)v \nabla (\varrho^\gamma).
\end{aligned} \end{equation}
Then using the integration by parts and mass equation $\eqref{INS}_1$ again, we find
$$\ba
\int_0^T\int\phi(t)v \cdot\nabla (\varrho^\gamma)
=&-\int_0^T\int\phi(t) \varrho^{\gamma-1} \varrho\text{div\,}v\\
=&\int_0^T\int\phi(t) \varrho^{\gamma-1}(\partial_{t}\varrho+v\cdot\nabla\varrho)
\\
=&\f{1}{\gamma}\int_0^T\int\phi(t)  \partial_{t}\varrho^{\gamma } +\f{1}{\gamma}\int_0^T\int\phi(t)v\cdot\nabla\varrho^{\gamma },
\ea $$
which in turn means that
\begin{equation}\label{d62}\begin{aligned}
		\int_0^T\int\phi(t)v \cdot\nabla (\varrho^\gamma)
		= \f{1}{\gamma-1}\int_0^T\int\phi(t)  \partial_{t}\varrho^{\gamma }.
\end{aligned}\end{equation}
Thanks to integration by parts, we arrive at
\begin{equation}\label{d8}\begin{aligned} &-\mu\int_0^T\int \phi(t)v^{\varepsilon} \Delta v^{\varepsilon}=\mu \int_0^T\int \phi(t)|\nabla v^{\varepsilon}|^{2},\\
		&- (\mu+\lambda)\int_0^T\int \phi(t)v^{\varepsilon} \nabla\text{div\,}v^\varepsilon=(\mu+\lambda)\int_0^T\int \phi(t)|\Div{v^{\varepsilon}}|^{2}.
\end{aligned}  \end{equation}
Plugging  \eqref{d2}-\eqref{d8} into \eqref{d1} and using the integration by parts, we conclude  that
\begin{equation}\label{d9}
	\begin{aligned}
		&-\int_0^T\int \phi(t)_t\left(\varrho{\frac{|v^{\varepsilon}|^2}{2}}+\f{1}{\gamma-1}\varrho^{\gamma }\right)+\int_0^T\int \left(\mu |\nabla v^\varepsilon|^2+(\mu+\lambda)|\Div v^\varepsilon|^2\right)\\
		=&-\int_0^T\int \phi(t)v^{\varepsilon} \B[\partial_{t} (\varrho v )^{\varepsilon}-\partial_{t}(\varrho v^{\varepsilon})\B]+\int_0^T\int\phi(t) \nabla v^{\varepsilon}  [(\varrho v\otimes v)^{\varepsilon}-(\varrho  v)\otimes v^{\varepsilon}]\\ &-\int_0^T\int\phi(t) [v^{\varepsilon}\nabla(\varrho^\gamma)^{\varepsilon}-v\nabla (\varrho^\gamma)].
\end{aligned}\end{equation}
It is enough  to prove that the terms on the right hand-side of \eqref{d9} tend to zero as $\varepsilon\rightarrow 0$.

Under the hypothesis
\be
\ba
&0\leq \varrho\in L^{k}(0,T;L^{l}(\mathbb{T}^d))
,\nabla\sqrt{\varrho}\in L^{\f{2pk}{2k(p-3)-p}}(0,T;L^{\f{2lq}{2l(q-3)-q}}(\mathbb{T}^d))\\
& v\in L^{p }(0,T;L^{q}(\mathbb{T}^d)),\nabla v \in L^{\f{pk}{k(p-2)-p}}(0,T;L^{\f{ql}{l(q-2)-q}}(\mathbb{T}^d))\ and\ v_0\in L^{\max\{
	\frac{2\gamma }{\gamma -1},\frac{q}{2}\}}(\mathbb{T}^d),
\ea\ee
with $k> \max\{\frac{p}{2(p-3)},\frac{p}{p-2}, \frac{(\gamma-1)p}{2},\frac{(\gamma-1)(d+q)p}{2q-d(p-3)}\}$, $l> \max\{\frac{q}{2(q-3)},\frac{q}{q-2}, \frac{(\gamma-1)q}{2}\}$, $p>3$ and $q>\max\{3, \frac{d(p-3)}{2}\}$.

In view of H\"older's inequality and Lemma \eqref{pLions}, we know that
\begin{equation}\label{3.81}\begin{aligned}
\int_s^t\int \phi(t)v^{\varepsilon} &\B[\partial_{t} (\varrho v )^{\varepsilon}-\partial_{t}(\varrho v^{\varepsilon})\B]\leq C\|v^{\varepsilon}\|_{L^{p}(L^{q})}
\|\partial_{t} (\varrho v )^{\varepsilon}-\partial_{t}(\varrho v^{\varepsilon})\|_{L^{\f{ p }{p-1}}(L^\f{q}{q-1})}\\
&\leq C\|v\|^{2}_{L^{p}(L^{q})}
\left(\|\varrho_{t}\|_{L^{\f{ p }{p-2}}(L^\f{q}{q-2})}+\|\nabla \varrho\|_{L^{\f{ p }{p-2}}(L^\f{q}{q-2})}\right).
\end{aligned}\end{equation}
To bound $\varrho_{t}$ and $\nabla \varrho$, we
employ  mass equation $\eqref{INS}_1$ to obtain
$$
\varrho_t=-2\sqrt{\varrho}v\cdot\nabla\sqrt{\varrho}-{\varrho}\text{div}v, \ and\ \nabla \varrho=2\sqrt{\varrho}\nabla \sqrt{\varrho}. $$
As a consequence, the triangle inequality and H\"older's inequality  guarantee that
\begin{equation}\label{3.91}\begin{aligned}
&\|\varrho_{t}\|_{L^{\f{ p }{p-2}}(L^\f{q}{q-2})}\\\leq&
 C\left(\|-2\sqrt{\varrho}v\cdot\nabla\sqrt{\varrho} \|_{L^{\f{ p }{p-2}}(L^\f{q}{q-2})}+
\|{\varrho}\text{div}v\|_{L^{\f{ p }{p-2}}(L^\f{q}{q-2})}\right)\\
\leq& C\left(
\|v\|_{L^{p}L^{q}}\|\nabla\sqrt{\varrho} \|_{L^{\f{2pk}{2k(p-3)-p}}(L^{\f{2lq}{2l(q-3)-q}})}\|\varrho\|_{L^{k}(L^{l})}^{1/2}+\| \nabla v\|_{L^{\f{pk}{k(p-2)-p}}(L^{\f{ql}{l(q-2)-q}})} \|\varrho\|_{L^{k}(L^{l})}\right),
 \end{aligned}\end{equation}
 and
\begin{equation}\label{3.92}
	\begin{aligned}
	\|\nabla \varrho\|_{L^{\f{ p }{p-2}}(L^\f{q}{q-2})}\leq & C\|\sqrt{\varrho}\nabla \sqrt{\varrho}\|_{L^{\f{ p }{p-2}}(L^\f{q}{q-2})}\\\leq& C \|\nabla\sqrt{\varrho} \|_{L^{\f{2pk}{2k(p-2)-p}}(L^{\f{2lq}{2l(q-2)-q}})}\|\varrho\|_{L^{k}(L^{l})}^{1/2}\\\leq& C \|\nabla\sqrt{\varrho} \|_{L^{\f{2pk}{2k(p-3)-p}}(L^{\f{2lq}{2l(q-3)-q}})}\|\varrho\|_{L^{k}(L^{l})}^{1/2}.
	\end{aligned}
\end{equation}
Plugging \eqref{3.91} and \eqref{3.92} into \eqref{3.81}, we get
 \begin{equation}\label{c3.11}\begin{aligned}
&\int_0^T\int \phi(t)v^{\varepsilon} \B[\partial_{t} (\varrho v )^{\varepsilon}-\partial_{t}(\varrho v^{\varepsilon})\B]\\\leq&
C\|v\|^{2}_{L^{p}(L^{q})}\\&
\left[\B(\|v\|_{L^{p}(L^{q})}+1\B)\|\nabla\sqrt{\varrho} \|_{L^{\f{2pk}{2k(p-3)-p}}(L^{\f{2lq}{2l(q-3)-q}})}\|\varrho\|_{L^{k}(L^{l})}^{1/2}+\| \nabla v\|_{L^{\f{pk}{k(p-2)-p}}(L^{\f{ql}{l(q-2)-q}})} \|\varrho\|_{L^{k}(L^{l})}\right],.
 \end{aligned}\end{equation}
 From Lemma \eqref{pLions}, we end up with, as $\varepsilon\rightarrow0$,
 $$
 \int_0^T\int \phi(t)v^{\varepsilon} \B[\partial_{t} (\varrho v )^{\varepsilon}-\partial_{t}(\varrho v^{\varepsilon})\B]\rightarrow0.$$
 In the light of the H\"older's inequality, we obtain
 \be\ba
 \|\varrho v\otimes v\|_{L^{\f{pk}{2k+p}}(L^{\f{ql}{2l+q}})}\leq \|v\|^{2}_{L^{p}(L^{q})}\|\varrho\|_{L^{k}(L^{l})}
 \ea\ee
  Using the integration by parts, we observe that
 \begin{equation}\label{3.11}\begin{aligned}
 		&\left|\int_0^T\int\phi(t)  \nabla v^{\varepsilon}  [(\varrho v\otimes v)^{\varepsilon}-(\varrho  v)\otimes v^{\varepsilon}]\right| \\
 		\leq &C\|\nabla v^\varepsilon\|_{L^{\f{pk}{k(p-2)-p}}(L^{\f{ql}{l(q-2)-q}})} \|(\varrho v\otimes v)^{\varepsilon}-(\varrho  v)\otimes v^{\varepsilon}\|_{L^{\f{pk}{2k+p}}(L^{\f{ql}{2l+q}})}\\
 		\leq & C\|\nabla v\|_{L^{\f{pk}{k(p-2)-p}}(L^{\f{ql}{l(q-2)-q}})} \left(\|(\varrho v \otimes v)^\varepsilon- \varrho v \otimes v\|_{L^{\f{pk}{2k+p}}(L^{\f{ql}{2l+q}})}+\|\varrho v\otimes v- \varrho v\otimes v^\varepsilon\|_{L^{\f{pk}{2k+p}}(L^{\f{ql}{2l+q}})}\right).
 \end{aligned}\end{equation}
 Hence, by the standard properties of the mollification, we have
 $$\int_0^T\int\phi(t)  \nabla v^{\varepsilon}  [(\varrho v\otimes v)^{\varepsilon}-(\varrho  v)\otimes v^{\varepsilon}] \rightarrow0 \text{ as }\varepsilon\rightarrow0.$$
 According to   H\"older's inequality on bound domain, we observe that
 $$\|\nabla(\varrho^\gamma)\|_{L^{\f{p}{p-1}}(L^{\f{q}{q-1}})}\leq
 C \|\varrho\|^{\f{2\gamma-1}{2}}_{L^{\f{pk(2\gamma-1)}{4k+p}}(L^{\f{ql(2\gamma-1)}{4l+q}})}\|\nabla\sqrt{\varrho} \|_{L^{\f{2pk}{2k(p-3)-p}}(L^{\f{2lq}{2l(q-3)-q}})},$$
 which in turn implies that
 \begin{equation}\label{d10}\begin{aligned}
 \int_0^T\int\phi(t) [v^{\varepsilon}\nabla(\varrho^\gamma)^{\varepsilon}-v\nabla (\varrho^\gamma)]\rightarrow0,
 \end{aligned}\end{equation}
 where we have used lemma \ref{lem2.11} and the condition $k\geq \frac{(\gamma-1)p}{2}$, $l\geq \frac{(\gamma-1)q}{2}$.

Then together with \eqref{c3.11}-\eqref{d10}, passing to the limits as $\varepsilon\rightarrow 0$, we know that
\begin{equation}\label{d11}
	\begin{aligned}
		-\int_0^T\int \phi_t \left(\frac{1}{2}\varrho |v|^2+\frac{\varrho^\gamma}{\gamma -1}\right)+\int_0^T\int\phi(t) \left(\mu |\nabla v|^2+(\mu+\lambda)|\Div v|^2\right)=0.	
	\end{aligned}
\end{equation}
The next objective is to get the energy equality up to the initial time $t=0$ by the similar method in \cite{[CLWX]} and \cite{[Yu2]}, for the convenience of the reader and the integrity of the paper, we give the details. First we prove the continuity of $\sqrt{\varrho}v(t)$ in the strong topology as $t\to 0^+$.  To do this, we define the function $f$ on $[0,T]$ as
$$f(t)=\int_{\mathbb{T}^d}(\varrho v)(t,x)\cdot \phi(x) dx,\ for\ any\ \phi(x)\in \mathfrak{D}(\mathbb{T}^d),$$
which is a continuous function with respect to $t\in [0,T]$. Moreover, since
$$\varrho \in L^\infty{0,T; L^\gamma (\mathbb{T}^n)}\ and \ \sqrt{\varrho}v\in L^\infty(0,T;L^2(\mathbb{T}^n)),$$
we can obtain $\varrho v\in L^\infty(0,T;L^{\frac{2\gamma }{\gamma+1}}(\mathbb{T}^d)).$\\
From the moument equation, we have
$$\frac{d}{dt}\int_{\mathbb{T}^d} (\varrho v)(t,x)\cdot \phi(x) dx=\int_{\mathbb{T}^d}\varrho v\otimes v:\nabla \phi(x)-P\Div\phi(x)-\mu\nabla v\nabla \phi(x)-(\mu+\lambda )\Div v\Div \phi(x)dx,$$
which is bounded for any function $\phi\in \mathfrak{D}(\mathbb{T}^d)$. Then it follows from the Corollary 2.1 in  \cite{[Feireisl2004]} that
\begin{equation}\label{d14}
	\varrho v\in C([0,T];L^{\frac{2\gamma}{\gamma +1}}_{\text{weak}}(\mathbb{T}^d)).
\end{equation}
On the other hand, we derive  from the mass equation $\eqref{INS}_1$ that
\begin{equation}\label{d12}
	\begin{aligned}
		\partial_t(\varrho^\gamma )=-\gamma \varrho ^\gamma \Div v-2\gamma \varrho^{\gamma -\frac{1}{2}}v\cdot \nabla \sqrt{\varrho},	
	\end{aligned}
\end{equation}
and
\begin{equation}\label{d13}
	\begin{aligned}
		\partial_t(\sqrt{\varrho})=\frac{\sqrt{\varrho }}{2}\Div v+v\cdot \nabla \sqrt{\varrho},
	\end{aligned}
\end{equation}
which together with \eqref{key0} gives
$$\partial_t\varrho^\gamma\in L^{\f{kp}{k(p-2)+(\gamma-1)p}}(L^{\f{lq}{l(q-2)+(\gamma-1)q}}),\ \ \ \nabla \varrho^\gamma\in L^{\f{kp}{k(p-3)+(\gamma-1)p}}( L^{\f{lq}{l(q-3)+(\gamma-1)q}}),$$
and
$$\partial_t\sqrt{\varrho}\in L^{\f{2kp}{2k(p-2)-p}}(L^{\f{2lq}{2l(q-2)-q}}),\ \ \ \nabla \sqrt{\varrho}\in L^{\f{2kp}{2k(p-3)-p}}( L^{\f{2lq}{2l(q-3)-q}}),$$
Hence, using the Aubin-Lions Lemma \ref{AL}, we can obtain
\begin{equation}\label{d15}
	\varrho^\gamma\in C([0,T];L^{\frac{lq}{l(q-3)+(\gamma-1)q}}((\mathbb{T}^d)) \,and\,\ \sqrt{\varrho }\in C([0,T];L^{\f{2lq}{2l(q-3)-q}}(\mathbb{T}^d)),\
\end{equation}
for $k\geq \frac{(\gamma-1)(d+q)p}{2q-d(p-3)}$, $p>3$ and $q>\max\{3, \frac{d(p-3)}{2}\}$.

Meanwhile, using the natural energy \eqref{energyineq}, \eqref{d14} and \eqref{d15}, we have
\begin{equation}\label{d16}
	\begin{aligned}
		0&\leq \overline{\lim_{t\rightarrow 0}}\int |\sqrt{\varrho} v-\sqrt{\varrho_0}v_0|^2 dx\\
		&=2\overline{\lim_{t\rightarrow 0}}\left(\int \left(\f{1}{2}\varrho |v|^2 +\f{1}{\gamma -1}\varrho ^\gamma \right)dx-\int\left(\f{1}{2}\varrho_0 |v_0|^2+\f{1}{\gamma -1}\varrho_0 ^\gamma \right)dx\right)\\
		&\ \ \ +2\overline{\lim_{t\rightarrow 0}}\left(\int\sqrt{\varrho_0}v_0\left(\sqrt{\varrho_0}v_0-\sqrt{\varrho} v\right)dx+\f{1}{\gamma -1}\int \left(\varrho_0^\gamma -\varrho^\gamma\right)dx\right)\\
		&\leq 2\overline{\lim_{t\rightarrow 0}}\int \sqrt{\varrho_0}v_0\left(\sqrt{\varrho_0}v_0-\sqrt{\varrho}v\right)dx\\
		&=2\overline{\lim_{t\rightarrow 0}}\int v_0 \left(\varrho_0 v_0 -\varrho v\right)dx+2\overline{\lim_{t\rightarrow 0}}\int v_0 \sqrt{\varrho }v\left(\sqrt{\varrho }-\sqrt{\varrho_0}\right)dx=0,
	\end{aligned}
\end{equation}
from
which it follows
\begin{equation}\label{d17}
	\sqrt{\varrho} v(t)\rightarrow \sqrt{\varrho }v(0)\ \ strongly\ in\ L^2(\Omega)\ as\ t\rightarrow 0^+.
\end{equation}
Similarly, one has the right temporal continuity  of $\sqrt{\varrho}v$ in $L^2(\Omega)$, hence, for any $t_0\geq 0$, we infer that
\begin{equation}\label{d18}
	\sqrt{\varrho} v(t)\rightarrow \sqrt{\varrho }v(t_0)\ \ strongly\ in\ L^2(\Omega)\ as\ t\rightarrow t_0^+.
\end{equation}
Before we go any further, it should be noted that \eqref{d11} remains valid for function $\phi$ belonging to $W^{1,\infty}$ rather than $C^1$, then for any $t_0>0$, we redefine the test function $\phi$ as $\phi_\tau$ for some positive $\tau$ and $\alpha $ such that $\tau +\alpha <t_0$, that is
\begin{equation}
	\phi_\tau(t)=\left\{\begin{array}{lll}
		0, & 0\leq t\leq \tau,\\
		\f{t-\tau}{\alpha}, & \tau\leq t\leq \tau+\alpha,\\
		1, &\tau+\alpha \leq t\leq t_0,\\
		\f{t_0-t}{\alpha }, & t_0\leq t\leq t_0 +\alpha ,\\
		0, & t_0+\alpha \leq t.
	\end{array}\right.
\end{equation}
Then substituting this test function into \eqref{d11}, we arrive at
\begin{equation}
	\begin{aligned}
		-\int_\tau^{\tau+\alpha}\int& \f{1}{\alpha}\left(\f{1}{2}\varrho v^2+\f{1}{\gamma-1}\varrho^\gamma \right)+\f{1}{\alpha}\int_{t_0}^{t_0+\alpha}\int 	\left(\f{1}{2}\varrho v^2+\f{1}{\gamma-1}\varrho^\gamma \right)\\
		&+\int_{\tau}^{t_0+\alpha}\int \phi_\tau \left(\mu |\nabla v|^2+\left(\mu+\lambda\right)|\Div v|^2\right)=0.
	\end{aligned}	
\end{equation}
Taking $\alpha\rightarrow 0$ and using  the fact that $\int_0^t\int\left(\mu |\nabla v|^2+\left(\mu+\lambda\right)|\Div v|^2\right)$ is continuous with respect to $t$ and the  Lebesgue point Theorem, we deduce that
\begin{equation}
	\begin{aligned}
		-\int&\left(\f{1}{2}\varrho v^2+\f{1}{\gamma-1}\varrho^\gamma \right)(\tau)dx+\int\left(\f{1}{2}\varrho v^2+\f{1}{\gamma-1}\varrho^\gamma \right)(t_0)dx\\
		&+\int_\tau^{t_0}\int\left(\mu |\nabla v|^2+\left(\mu+\lambda\right)|\Div v|^2\right)=0.
	\end{aligned}
\end{equation}
Finally, letting $\tau\rightarrow 0$, using the continuity of $\int_0^t\int\left(\mu |\nabla v|^2+\left(\mu+\lambda\right)|\Div v|^2\right)$, \eqref{d14} and \eqref{d17}, we can obtain
\begin{equation}\ba
	\int\left(\f{1}{2}\varrho v^2+\f{1}{\gamma-1}\varrho^\gamma \right)(t_0)dx+\int_0^{t_0}\int&\left(\mu |\nabla v|^2+\left(\mu+\lambda\right)|\Div v|^2\right)dxds\\=&\int\left(\f{1}{2}\varrho_0 v_0^2+\f{1}{\gamma-1}\varrho_0^\gamma \right)dx.
	\ea\end{equation}
Then we complete the proof of Theorem \ref{the1.5}.
\end{proof}

\section*{Acknowledgement}
The authors would like to express their sincere gratitude to Prof. Quansen Jiu for pointing
out this problem to us.
Ye was partially supported by the National Natural Science Foundation of China  under grant (No.11701145) and China Postdoctoral Science Foundation (No. 2020M672196).
 Wang was partially supported by  the National Natural
 Science Foundation of China under grant (No. 11971446, No. 12071113   and  No.  11601492).
Yu was partially supported by the
National Natural Science Foundation of China (NNSFC) (No. 11901040), Beijing Natural
Science Foundation (BNSF) (No. 1204030) and Beijing Municipal Education Commission
(KM202011232020).

\end{document}